\def\Z{\mathbb{Z}}
\def\uA{\mathcal{A}}
\def\uB{\mathcal{B}}
\def\uC{\mathcal{C}}
\def\uD{\mathcal{D}}
\def\uL{\mathcal{L}}
\def\uuL{\underline{\mathcal{L}}}
\def\Hom{\mathrm{Hom}}
\def\id{\mathrm{id}}
\def\uCpi{\uC_{\pi}^+}
\def\C{\textbf{k}}
\def\e{\textbf{e}}
\def\svec{\mathrm{sVect}}
\def\usvec{\underline{\mathrm{sVect}}}
\def\euCpi{\underline{\uC_\pi^+}}
\def\vecgw{\mathrm{Vec}_G^\tau}
\def\ev{\mathrm{ev}}
\def\coev{\mathrm{coev}}
\def\unit{\textbf{1}}
\def\ounit{\textbf{1}^{\underline{1}}}
\def\eunit{\textbf{1}^{\underline{0}}}
\def\xa{X^{\underline{a}}}
\def\iso{\xrightarrow{\sim}}
\def\whuL{\widehat{\left(\underline{\uL}\right)}}
\def\uI{\mathcal{I}}
\def\sgr{\mathrm{sGr}(\uC)}
\def\sgrr{\mathrm{sGr}}
\newcolumntype{Y}{>{\centering\arraybackslash}X}
\newtheorem{theorem}{Theorem}[section]
\newtheorem*{theorem*}{Theorem}
\newtheorem*{example*}{Example}
\newtheorem{proposition}[theorem]{Proposition}
\newtheorem{corollary}[theorem]{Corollary}
\newtheorem{lemma}[theorem]{Lemma}
\theoremstyle{definition}
\newtheorem{definition}[theorem]{Definition}
\newtheorem{example}[theorem]{Example}
\newtheorem{remark}[theorem]{Remark}
\begin{document}

\title{Fermionic 6j-symbols in superfusion categories}
\author{Robert Usher}

\begin{abstract}
We describe how the study of \emph{superfusion categories} (roughly speaking, fusion categories enriched over the category of super vector spaces)
reduces to that of \emph{fusion categories over $\usvec$}, in the sense of \cite{DGNO:BFC}.  Following \cite{BE:2016}, we give
the construction of the \emph{underlying fusion category} of a superfusion category, and give an explicit formula for the associator
in this category in terms of 6$j$-symbols.  We give a definition of the $\pi$-Grothendieck ring of a superfusion category,
and prove a version of Ocneanu rigidity for superfusion categories.
\end{abstract}

\maketitle

\section{Introduction}
In condensed matter physics, the use of fusion categories to construct topological quantum field theories
is reasonably well understood.  In \cite{TV:Q6J} and \cite{T:QIK3},  Turaev and Viro constructed
invariants of 3-manifolds from quantum 6$j$-symbols, and showed that these lead to a 3-dimensional
non-oriented topological quantum field theory.  Barrett and Westbury \cite{BW:IPL3M} showed
that these invariants can be constructed from any spherical fusion category.  Following this,
Kirillov and Balsam \cite{KB:TVIETQFT}, and Turaev and Virelizier \cite{TV:two3dTQFT} proved that
the Turaev-Viro-Barrett-Westbury invariants of a spherical fusion category $\uA$ are the same
as the Reshetikhin-Turaev invariants \cite{RT:Invariants} derived from $\mathcal{Z}(\uA)$.

More recently, Douglas, Schommer-Pries and Snyder \cite{DSPS:DTC} showed that fusion categories are fully dualizable objects
in the 3-category of monoidal categories, and so by the cobordism hypothesis \cite{Lurie:TFT} we can
associate a fully local 3-dimensional TQFT to any fusion category.

Gaiotto and Kapustin \cite{GK:2015}, following the work of Gu, Wang and Wen \cite{GWW:2010} described a fermionic
analogue of the Turaev-Viro construction whose initial data is a spherical \emph{superfusion category},
and Bhardwaj, Gaiotto and Kapustin \cite{BGK:sTTF} have further studied spin-TQFTs.  
In comparison to the fusion category case however, not much is known about how to construct TQFTs using superfusion categories.

A \emph{superfusion category} over $\C$ is a semisimple rigid monoidal supercategory (i.e. a category enriched
over $\usvec$) with finitely many simple objects
and finite dimensional superspaces of morphisms, with simple unit object.  In particular, the collection of morphisms
between objects forms a super vector space, and the tensor product of morphisms satisfies the \emph{super interchange law}
\[
(f \otimes g) \circ (h \otimes k) = (-1)^{|g||h|} (f \circ h) \otimes (g \circ k)
\]
Following \cite{GWW:2010}, a simple object $X$ is called \emph{Bosonic} if $\mathrm{End}(X) \simeq \C^{1|0}$,
and \emph{Majorana} if $\mathrm{End}(X) \simeq \C^{1|1}$.   A superfusion category is called
\emph{Bosonic} if all of its simple objects are Bosonic.  Since the unit object in any
superfusion category is necessarily Bosonic, there are no Majorana superfusion categories.

In this paper, we give the construction of the \emph{underlying fusion category} of a superfusion category, using a construction described by Brundan and Ellis \cite{BE:2016}.  The underlying fusion category of a superfusion category is naturally endowed with the structure of a fusion category over $\usvec$ (in the sense of \cite[Definition 7.13.1]{DGNO:BFC}), the category of super vector spaces together with the even linear maps between them.

The associator in a semisimple tensor category (in particular, a fusion category) admits a description in terms of \emph{6$j$-symbols} satisfying a version of the pentagon equation, see i.e. \cite{T:QIK3}, \cite{W:TQC}. In a similar way, the associator in a superfusion category can be described
in terms of \emph{fermionic 6$j$-symbols} satisfying the \emph{super pentagon equation} \cite{GWW:2010}.

The main goal of this paper is to describe the relation between a superfusion category and its underlying fusion category.
More precisely, we give an explicit formula for the 6$j$-symbols of the underlying fusion category in terms
of the fermionic 6$j$-symbols of the superfusion category, and show that these 6$j$-symbols satisfy the pentagon equation.

If $\uC$ is a Bosonic pointed superfusion category, i.e. a Bosonic superfusion category such that the isomorphism classes
of simple objects form a group $G$, then the fermionic 6$j$-symbols in $\uC$ are described by a 3-supercocycle \cite{GWW:2010} $\widetilde{F} : G^3 \to \C^\times$ satisfying
\begin{equation*}
\widetilde{F}(g,h,k) \widetilde{F}(g,hk,l) \widetilde{F}(h,k,l) = (-1)^{\omega(g,h) \omega(k,l)} \widetilde{F}(gh,k,l) \widetilde{F}(g,h,kl)
\end{equation*}
where $\omega \in H^2(G,\Z/2\Z)$ is a 2-cocycle on $G$.  In this situation, our formula for the 6$j$-symbols on the underlying
fusion category gives a 3-cocycle on the $\Z/2\Z$-central extension of $G$ determined by $\omega$, whose restriction to $G$
is $\widetilde{F}$.  In particular, this implies that every 3-supercocycle on $G$ arises as the restriction of a (genuine)
3-cocycle on a central extension of $G$ by $\Z/2\Z$.

We also define the \emph{$\pi$-Grothendieck ring} $\sgr$ of a superfusion category $\uC$,
which is an algebra over $\Z^\pi := \Z[\pi]/(\pi^2-1)$, and describe the relation between
the $\pi$-Grothendieck ring of $\uC$ and the Grothendieck ring of the underlying fusion category $\euCpi$.
As a corollary of this, we deduce a version of Ocneanu rigidity for superfusion categories.

\section{Fusion categories}
Let $\C$ denote an algebraically closed field of characteristic 0.

\begin{definition}[\cite{ENO:OFC}]
A \emph{fusion category} over $\C$ is a semisimple rigid $\C$-linear monoidal category $\uA$ with finitely many
isomorphism classes of simple objects and finite-dimensional spaces of morphisms such that the unit object is simple.
\end{definition}

In this section, we recall how the associator
\[
a : (- \otimes -) \otimes - \iso - \otimes (- \otimes -).
\]
in a fusion category can be described in terms of \emph{6$j$-symbols}, closely following the discussion in \cite[Chapter 4]{W:TQC}, see also \cite[Chapter VI]{T:QIK3}.

\begin{example}
The category $\mathrm{Vec}$ of finite-dimensional $\C$-vector spaces is a fusion category.  
More generally, if $G$ is a finite group and $\tau \in H^3(G,\C^\times)$ is a 3-cocycle,
then the category $\mathrm{Vec}_G^\tau$ of $G$-graded finite dimensional $\C$-vector spaces
with associativity defined by $\tau$ is a fusion category.  
\label{ex:fusioncat}
\end{example}

\subsection{6$j$-symbols}
Let $\uA$ be a fusion category, and $X_i$, $i \in I$ representatives of the isomorphism classes
of simple objects in $\uA$.  The monoidal structure on $\uA$ determines the \emph{fusion rules} 
\[
X_i \otimes X_j \simeq \bigoplus_{m \in I} N^{ij}_m X_m
\]
where 
\[
N^{ij}_m = \dim \Hom_{\uA}(X_m, X_i \otimes X_j) = \dim \Hom_{\uA}(X_i \otimes X_j, X_m) \in \Z_{\geq 0}.
\]
is the \emph{multiplicity} of $X_m$ in $X_i \otimes X_j$.  The notion of admissability will be useful.

\begin{definition}[{see \cite[Definition 4.1]{W:TQC}}] \label{def:fusionadmissable}
Let $\uA$ be a fusion category with simple objects indexed by a set $I$.  We say
a triple $(i,j,m) \in I^3$ is \emph{admissable} if $N^{ij}_m > 0$.  A quadruple $(i,j,m,\alpha) \in I^3 \times \Z_{\geq 0}$
is \emph{admissable} if $(i,j,m)$ is admissable, and $1 \leq \alpha \leq N^{ij}_m$.  A decuple $(i,j,m,k,n,t,\alpha,\beta,\eta,\varphi) \in I^6 \times \Z_{\geq 0}^4$ is \emph{admissable} if each fo the quadruples
$(i,j,m,\alpha)$, $(m,k,n,\beta)$, $(j,k,t,\eta)$ and $(i,t,n, \varphi)$ are admissable.
\end{definition}

\begin{remark}
A fusion category is called \emph{multiplicity-free} if $N^{ij}_m \in \{0,1\}$ for all $i, j, m \in I$ \cite[Definition 4.5]{W:TQC}.
In the multiplicity-free case, an admissable decuple is completely described by the sextuple $(i,j,m,k,n,t)$,
in which case this definition recovers \cite[Definition 4.7]{W:TQC}.
\end{remark}

That the triple $(i,j,m)$ is admissable is equivalent to saying that $X_m$ is a direct
summand of $X_i \otimes X_j$.  For each admissable triple $(i,j,m)$, choose a basis for the space
\[
\Hom_{\uA}(X_i \otimes X_j,X_m).
\]
Admissable quadruples of the form $(i,j,m,\alpha)$ then label the basis vectors
of $\Hom_{\uA}(X_i \otimes X_j, X_m)$.  We denote these basis vectors by $\e^{ij}_m(\alpha)$, where $1 \leq \alpha \leq N^{ij}_m$.  We wish to describe
the associator $a(X_i,X_j,X_k) : (X_i \otimes X_j) \otimes X_k \to X_i \otimes (X_j \otimes X_k)$ in terms of 
our chosen basis.  Indeed, fixing admissable quadruples $(i,j,m,\alpha)$ and $(m,k,n,\beta)$, we have the composition
\begin{equation}
(X_i \otimes X_j) \otimes X_k \xrightarrow{\e^{ij}_m(\alpha) \otimes \id_{X_k}} X_m \otimes X_k \xrightarrow{\e^{mk}_n(\beta)} X_n
\label{eq:6j_fusioncat}
\end{equation}
which we may represent graphically as
\begin{center}
\includegraphics{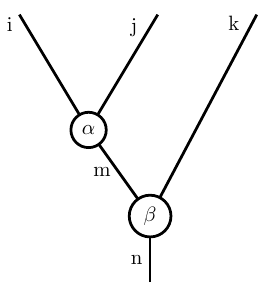}
\end{center}
Let $t \in I$.  If $(j,k,t,\eta)$ and $(i,t,n,\varphi)$ are admissable, then we have the composition
\begin{equation}
(X_i \otimes X_j) \otimes X_k \xrightarrow{a(X_i,X_j,X_k)} X_i \otimes (X_j \otimes X_k) \xrightarrow{\id_{X_i} \otimes \e^{jk}_t(\eta)} X_i \otimes X_t \xrightarrow{\e^{it}_n(\varphi)} X_n
\label{eq:6j_fusioncatrhs}
\end{equation}
which we may represent graphically as
\begin{center}
\includegraphics{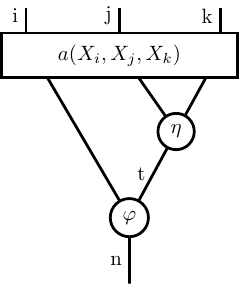}
\end{center}
Fix $i, j, k, n \in I$.  Taking the direct sum of the above compositions over
all $t \in I$ such that $(j,k,t,\eta)$ and $(i,t,n,\varphi)$ are admissable gives an isomorphism \cite[Lemma 1.1.1, Lemma 1.1.2]{T:QIK3}
\begin{align*}
\bigoplus_{t \in I} \Hom_{\uA}(X_j \otimes X_k, X_t) \otimes \Hom_{\uA}(X_i \otimes X_t, X_n) &\iso \Hom_{\uA}((X_i \otimes X_j) \otimes X_k, X_n)\\
\e^{jk}_t(\eta) \otimes \e^{it}_n(\varphi) &\mapsto \e^{it}_n(\varphi) \circ (\id_{X_i} \otimes \e^{jk}_t(\eta)) \circ a(X_i,X_j,X_k)
\end{align*}
Expressing \eqref{eq:6j_fusioncat} in terms of this basis determines a constant $F^{ijm,\alpha\beta}_{knt,\eta\varphi} \in \C$
for each admissable decuple $(i,j,m,k,n,t,\alpha,\beta,\eta,\varphi)$ in $\uA$, defined by the graphical equation:
\begin{center} \label{diag:6j_defpic}
\includegraphics{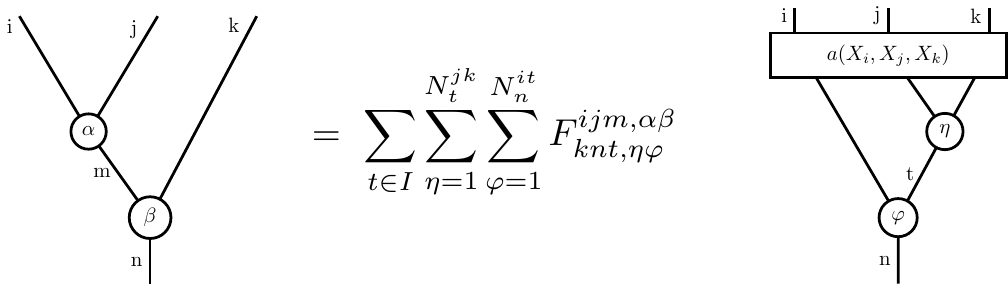}
\end{center}
This describes the associator in $\uA$ as a collection of matrices
\begin{equation*}
F^{ijm}_{knt} : \Hom_{\uA}(X_i \otimes X_j, X_m) \otimes \Hom_{\uA}(X_m \otimes X_k, X_n) \to \Hom_{\uA}(X_j \otimes X_k, X_t) \otimes \Hom_{\uA}(X_i \otimes X_t,X_n)
\end{equation*}
whose entries are the constants defined above.  The matrices $F^{ijm}_{knt}$ are called \emph{6$j$-symbols}, as they depend on six indices.  
If $(i,j,m,k,n,t,\alpha,\beta,\eta,\varphi)$ is not admissable, then by convention we set $F^{ijm,\alpha\beta}_{knt,\eta\varphi} = 0$. 
The pentagon axiom in $\uA$ is then equivalent to the following equation in terms of 6$j$-symbols.

\begin{lemma}[Pentagon equation]
Let $\uA$ be a fusion category with simple objects indexed by a set $I$.  If $i,j, k,l,m,n,t,p,q,s \in I$ 
and $\alpha,\beta,\eta,\chi,\gamma,\delta,\phi \in \Z_{\geq 0}$.  Then
\begin{equation}
\sum_{t \in I} \sum_{\eta=1}^{N^{jk}_t} \sum_{\varphi=1}^{N^{it}_n} \sum_{\kappa=1}^{N^{tl}_s} F^{ijm,\alpha\beta}_{knt,\eta\varphi} F^{itn,\varphi\chi}_{lps,\kappa\gamma} F^{jkt,\eta\kappa}_{lsq,\delta\phi} = \sum_{\epsilon=1}^{N_p^{mq}} F^{mkn,\beta\chi}_{lpq,\delta\epsilon} F^{ijm,\alpha\epsilon}_{qps,\delta\gamma}
\label{def:6j_pentagon}
\end{equation}
\end{lemma}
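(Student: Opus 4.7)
The plan is to apply the pentagon axiom for the associator of $\uA$ to the quadruple $(X_i, X_j, X_k, X_l)$ and translate it into an equation of matrix elements by expanding each associator using the definition of its 6$j$-symbol. The pentagon axiom states
\begin{equation*}
(\id_{X_i} \otimes a(X_j, X_k, X_l)) \circ a(X_i, X_j \otimes X_k, X_l) \circ (a(X_i, X_j, X_k) \otimes \id_{X_l}) = a(X_i, X_j, X_k \otimes X_l) \circ a(X_i \otimes X_j, X_k, X_l),
\end{equation*}
an equality of morphisms $((X_i \otimes X_j) \otimes X_k) \otimes X_l \to X_i \otimes (X_j \otimes (X_k \otimes X_l))$.

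Concretely, I would fix a basis element
\begin{equation*}
v = \e^{iq}_p(\gamma) \circ (\id_{X_i} \otimes \e^{js}_q(\delta)) \circ (\id_{X_i} \otimes (\id_{X_j} \otimes \e^{kl}_s(\phi)))
\end{equation*}
of $\Hom_{\uA}(X_i \otimes (X_j \otimes (X_k \otimes X_l)), X_p)$, post-compose each side of the pentagon with $v$, and expand the resulting morphisms $((X_i \otimes X_j) \otimes X_k) \otimes X_l \to X_p$ in the basis
\begin{equation*}
u_{m,n,\alpha,\beta,\chi} = \e^{nl}_p(\chi) \circ (\e^{mk}_n(\beta) \otimes \id_{X_l}) \circ ((\e^{ij}_m(\alpha) \otimes \id_{X_k}) \otimes \id_{X_l}).
\end{equation*}
On the three-associator side of the pentagon, the factors $a(X_i, X_j, X_k) \otimes \id_{X_l}$, $a(X_i, X_j \otimes X_k, X_l)$ (applied after using $\e^{jk}_t(\eta)$ to project $X_j \otimes X_k$ onto its simple summand $X_t$), and $\id_{X_i} \otimes a(X_j, X_k, X_l)$ contribute respectively $F^{ijm,\alpha\beta}_{knt,\eta\varphi}$, $F^{itn,\varphi\chi}_{lps,\kappa\gamma}$, and $F^{jkt,\eta\kappa}_{lsq,\delta\phi}$, together with sums over the intermediate labels $t, \eta, \varphi, \kappa$. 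On the two-associator side, $a(X_i \otimes X_j, X_k, X_l)$ contributes $F^{mkn,\beta\chi}_{lpq,\delta\epsilon}$ summed over $\epsilon$, and $a(X_i, X_j, X_k \otimes X_l)$ contributes $F^{ijm,\alpha\epsilon}_{qps,\delta\gamma}$. Equating the coefficients of $u_{m,n,\alpha,\beta,\chi}$ on the two sides then produces the stated pentagon equation.

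The main obstacle is purely notational: one must identify each associator in the pentagon with the correct 6$j$-symbol matrix, consistently name the intermediate simple-object and multiplicity labels across all five associators, and invoke the convention that 6$j$-symbols with inadmissible indices vanish so as to match the index ranges in the statement. Once this bookkeeping is in place, the equation follows from a routine comparison of coefficients with respect to the two chosen bases, and no additional ingredient beyond the definition of the 6$j$-symbols and the pentagon axiom is required.
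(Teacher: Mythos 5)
Your overall strategy --- apply the pentagon axiom to the quadruple $(X_i,X_j,X_k,X_l)$, expand both sides in the chosen bases of the relevant hom-spaces, and compare coefficients --- is the standard argument, and it is exactly what the paper (which states this lemma without proof, as a coordinate translation of the pentagon axiom, following Turaev and Wang) relies on. Your labelling of the intermediate objects and multiplicities is consistent with the statement.

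There is, however, a directional error in the step where you identify the expansion coefficients. The paper's defining graphical equation for $F^{ijm,\alpha\beta}_{knt,\eta\varphi}$ expresses the \emph{left-parenthesized} composition $\e^{mk}_n(\beta)\circ(\e^{ij}_m(\alpha)\otimes\id_{X_k})$ as a linear combination, with coefficients $F^{ijm,\alpha\beta}_{knt,\eta\varphi}$, of the vectors $\e^{it}_n(\varphi)\circ(\id_{X_i}\otimes\e^{jk}_t(\eta))\circ a(X_i,X_j,X_k)$. Consequently, when you post-compose a right-parenthesized basis vector $v$ with an associator and expand the result in the left-parenthesized basis, the coefficients that appear are entries of $F^{-1}$, not of $F$: from $u_\mu=\sum_\nu F_{\mu\nu}\,(v_\nu\circ a)$ one gets $v_\nu\circ a=\sum_\mu (F^{-1})_{\nu\mu}\,u_\mu$. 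So your computation, carried out literally, yields the pentagon identity for the inverse matrices $\bigl(F^{ijm}_{knt}\bigr)^{-1}$ rather than the stated identity; in the higher-multiplicity case these are genuinely different-looking (though equivalent) equations, so the claim that the three associators ``contribute respectively'' $F^{ijm,\alpha\beta}_{knt,\eta\varphi}$, $F^{itn,\varphi\chi}_{lps,\kappa\gamma}$, $F^{jkt,\eta\kappa}_{lsq,\delta\phi}$ is not correct under the paper's convention. The fix is easy: either run the expansion in the other direction --- start from the left-parenthesized basis vector $u_{m,n,\alpha,\beta,\chi}$ of $\Hom_{\uA}(((X_i\otimes X_j)\otimes X_k)\otimes X_l,X_p)$ and apply the defining relation of the $6j$-symbols three times along the long path and twice along the short path, which lands you in the basis $\{v\circ P\}$ with coefficients that really are the stated products of $F$-entries, then invoke $P_{\mathrm{long}}=P_{\mathrm{short}}$ and compare coefficients --- or keep your setup and note at the end that equality of the two inverse change-of-basis matrices is equivalent to equality of the matrices themselves. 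With that adjustment the argument is complete.
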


\begin{example}[see Example 2.3.8 \cite{ENO:2015}] \label{ex:ptedfusioncase}
Continuing Example \ref{ex:fusioncat}, the fusion category $\vecgw$ has pairwise non-isomorphic
simple objects $\{\delta_g\}_{g \in G}$ satisfying $\delta_g \otimes \delta_h \simeq \delta_{gh}$,
so admissable quadruples are of the form $(g,h,gh,1)$ for all $g,h \in G$.  Thus given $g,h,k \in G$
we can write $F(g,h,k) := F^{gh}_k \in \C^\times$ for the corresponding 6$j$-symbol unambiguously.  The pentagon equation \eqref{def:6j_pentagon} then reduces to
\begin{equation*}
F(g,h,k) F(g,hk,l) F(h,k,l) = F(gh,k,l) F(g,h,kl)\quad g, h, k, l \in G
\end{equation*}
so $F$ is a 3-cocycle on $G$ with values in $\C^\times$.
\end{example}

\section{Superfusion categories}
In this section we recall the definition of a superfusion category using the language of \cite{BE:2016}, and
describe the associator in a superfusion category in terms of \emph{fermionic 6$j$-symbols}, following \cite{GK:2015}.  
By a superspace we always mean a $\Z/2\Z$-graded $\C$-vector space $V$.  The parity of a homogeneous element $v \in V$
will be denoted by $|v|$.

\begin{definition}
Let $\usvec$ be the category whose objects are superspaces, and whose morphisms are even linear maps,
i.e. linear maps preserving the grading.  
\end{definition}

We can make $\usvec$ into a monoidal category by defining
the tensor product of superspaces $V$ and $W$ to be the superspace $V \otimes W$ with
$(V \otimes W)_0 := (V_0 \otimes W_0) \oplus (V_1 \otimes W_1)$ and $(V \otimes W)_1 := (V_1 \otimes W_0) \oplus (V_0 \otimes W_1)$,
with the tensor product of morphisms defined in the obvious way.  The braiding
\begin{equation*}
c_{V,W}(v \otimes w) = (-1)^{|v||w|} v \otimes w
\label{eq:usvecbraiding}
\end{equation*}
defined on homogeneous $v \in V$ and $w \in W$ makes $\usvec$ into a symmetric monoidal category.

\begin{definition}[{see \cite[Definition 1.1]{BE:2016} and \cite[Section 1.2]{Kelly:ECT} for details}]
A \emph{supercategory} is a $\usvec$-enriched category.  A \emph{superfunctor} between supercategories
is a $\usvec$-enriched functor, and a \emph{supernatural transformation} between superfunctors is a $\usvec$-enriched supernatural transformation. 
We say a supernatural transformation is \emph{even} if all its component maps are even.
\end{definition}

In particular, if $\mathcal{A}$ is a supercategory, then $\Hom_{\mathcal{A}}(X,Y)$ is a superspace for all $X$, $Y \in \uA$,
and composition
\[
\Hom_{\mathcal{A}}(Z,Y) \otimes \Hom_{\mathcal{A}}(X,Y) \to \Hom_{\mathcal{A}}(X,Z)
\]
is an \emph{even} linear map for all $X, Y, Z \in \uA$.

\begin{remark}
Given supercategories $\mathcal{A}$ and $\mathcal{B}$, we can form their tensor product $\mathcal{A} \boxtimes \mathcal{B}$.  Objects
of $\uA \boxtimes \uB$ are pairs $(X,Y)$ with $X \in \uA$ and $Y \in \mathcal{B}$.  Morphisms in $\uA \boxtimes \uB$
are given by $\Hom_{\mathcal{A} \boxtimes \mathcal{B}}((X,Y),(W,Z)) := \Hom_{\mathcal{A}}(X,W) \otimes \Hom_{\mathcal{B}}(Y,Z)$, with composition in $\mathcal{A} \boxtimes \mathcal{B}$ defined using the braiding in $\usvec$, see \cite{BE:2016} for details.
\end{remark}

\begin{definition}[{\cite[Definition 1.4]{BE:2016}}]
A \emph{monoidal supercategory} is a supercategory $\mathcal{D}$, together with a tensor product superfunctor $- \otimes - : \mathcal{D} \boxtimes \mathcal{D} \to \mathcal{D}$, a unit object $\unit$, and even supernatural isomorphisms $a : (- \otimes -) \otimes - \iso - \otimes (- \otimes -)$, $l : \unit \otimes - \iso -$ and $r : - \otimes \unit \iso -$ satisfying axioms analogous to the ones of a monoidal category.
A \emph{monoidal superfunctor} between monoidal supercategories $\mathcal{D}$ and $\mathcal{E}$ is a superfunctor
$F : \mathcal{D} \to \mathcal{E}$ such that $F(\unit_{\mathcal{D}})$ is evenly isomorphic
to $\unit_{\mathcal{E}}$, together with even coherence maps $J : F(-) \otimes F(-) \to F(- \otimes -)$
satisfying the usual axioms.
\end{definition}

An important feature of monoidal supercategories is the \emph{super interchange law}
\begin{equation*}
(f \otimes g) \circ (h \otimes k) = (-1)^{|g||h|} (f \circ h) \otimes (g \circ k)
\label{eq:SIL}
\end{equation*}
describing the composition of tensor products of morphisms.  We recall the following definitions from \cite[Appendix C]{GWW:2010}.

\begin{definition}
A \emph{superfusion category} over $\C$ is a semisimple rigid monoidal supercategory $\uC$
with finitely many simple objects and finite dimensional superspaces of morphisms such
that the unit object $\unit$ is simple.  A simple object $X \in \uC$ is \emph{Bosonic} if $\mathrm{Hom}_{\uC}(X,X) \simeq \C^{1|0}$,
and \emph{Majorana} if $\mathrm{End}_{\uC}(X) \simeq \C^{1|1}$.  A superfusion category is called \emph{Bosonic} if all its simple objects are Bosonic.
\end{definition}

The unit object $\unit$ in a superfusion category $\uC$ is always Bosonic. Indeed, since $\unit \otimes \unit \simeq \unit$,
the tensor product functor induces an embedding
\[
\Hom_{\uC}(\unit,\unit) \otimes \Hom_{\uC}(\unit,\unit) \to \Hom_{\uC}(\unit \otimes \unit, \unit \otimes \unit) \simeq \Hom_{\uC}(\unit,\unit)
\]
which implies $\Hom_{\uC}(\unit,\unit) \simeq \C^{1|0}$.

\begin{remark}
That $\uC$ is rigid means that for each $X \in \uC$ we have a left dual $X^{*} \in \uC$ and a right dual $^{*}X \in \uC$,
together with even morphisms $\ev_X : X^* \otimes X \to \textbf{1}$, $\coev_X : \textbf{1} \to X \otimes X^*$, $\ev'_X : X \otimes ^{*}X \to \textbf{1}$,
and $\coev'_X : \textbf{1} \to ^{*}X \otimes X$ satisfying the usual equations, see \cite[Section 2.10]{ENO:2015} for details.
\end{remark}

\subsection{Fermionic 6$j$-symbols}
Let $\uC$ be a superfusion category, and $X_i$, $i \in I$ representatives of the isomorphism
classes of simple objects in $\uC$.  The monoidal structure on $\uC$
determines the \emph{superfusion rules}
\[
X_i \otimes X_j \simeq \bigoplus_{m \in I} N^{ij}_m X_m
\]
where 
\[
N^{ij}_m = \dim \Hom_{\uC}(X_i \otimes X_j, X_m) = \dim \Hom_{\uC}(X_m, X_i \otimes X_j) \in \Z_{\geq 0}
\]
i.e. $N^{ij}_m$ is the dimension of the superspace $\Hom_{\uC}(X_i \otimes X_j, X_m)$.  With this notation, our notion of admissable triple, quadruple, and decuple remain the same as in Definition \ref{def:fusionadmissable}.  As in the fusion category case, for each admissable triple $(i,j,m)$ we choose a homogeneous basis for the superspace $\Hom_{\uC}(X_i \otimes X_j, X_m)$ denoted by $\e^{ij}_m(\alpha)$, where $1 \leq \alpha \leq N^{ij}_m$.  Let $s^{ij}_m(\alpha) = |\e^{ij}_m(\alpha)|$ denote the parity of the corresponding basis vector.  

\begin{definition}\label{def:parityadmissable}
We say that an admissable decuple $(i,j,m,k,n,t,\alpha,\beta,\eta,\varphi)$
is \emph{parity admissable} if
\begin{equation}
s^{ij}_m(\alpha) + s^{mk}_n(\beta) = s^{jk}_t(\eta) + s^{it}_n(\varphi).
\label{eq:defparityadmissable}
\end{equation}
\end{definition}
In exactly the same way as in the fusion category case, we define constants $\widetilde{F}^{ijm,\alpha\beta}_{knt,\eta\varphi} \in \C$
for each admissable decuple $(i,j,m,k,n,t,\alpha,\beta,\eta,\varphi)$ in $\uC$, defined by the graphical equation

\begin{center} \label{diag:f6j_defpic}
\includegraphics{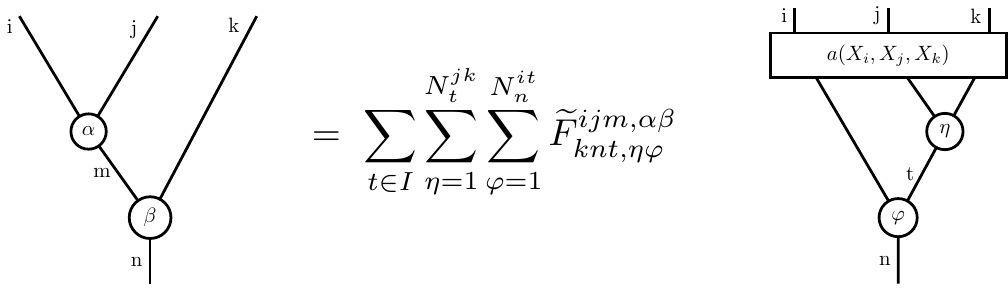}
\end{center}
\begin{remark}
We recover the parity admissability condition \eqref{eq:defparityadmissable} by comparing the parity
of both sides of the above equation.  In particular, the constant $\widetilde{F}^{ijm,\alpha\beta}_{knt,\eta\varphi}$
is non-zero only for parity admissable decuples $(i,j,m,k,n,t,\alpha,\beta,\eta,\varphi)$.
\end{remark}
This describes the associativity constraint in $\uC$ as a collection of invertible matrices
\[
\widetilde{F}^{ijm}_{knt} : \Hom_{\uC}(X_i \otimes X_j, X_m) \otimes \Hom_{\uC}(X_m \otimes X_k ,X_n) \to \Hom_{\uC}(X_j \otimes X_k, X_t) \otimes \Hom_{\uC}(X_i \otimes X_t,X_n)
\]
whose entries are the constants defined above.  The matrices $\widetilde{F}^{ijm}_{knt}$ are called \emph{fermionic 6$j$-symbols}.  If $(i,j,m,k,n,t,\alpha,\beta,\eta,\varphi)$ is not (parity) admissable, then by convention we set $\widetilde{F}^{ijm,\alpha\beta}_{knt,\eta\varphi} = 0$. The super pentagon axiom in $\uC$ is equivalent to the following
equation in terms of fermionic 6$j$-symbols, called the \emph{fermionic pentagon identity} in \cite{GWW:2010}.

\begin{lemma}[Super pentagon equation]
Let $\uC$ be a superfusion category with simple objects indexed by a set $I$.  If $i,j, k,l,m,n,t,p,q,s \in I$ 
and $\alpha,\beta,\eta,\chi,\gamma,\delta,\phi \in \Z_{\geq 0}$, then
\begin{equation}
\sum_{t \in I} \sum_{\eta=1}^{N^{jk}_t} \sum_{\varphi=1}^{N^{it}_n} \sum_{\kappa=1}^{N^{tl}_s} \widetilde{F}^{ijm,\alpha\beta}_{knt,\eta\varphi} \widetilde{F}^{itn,\varphi\chi}_{lps,\kappa\gamma} \widetilde{F}^{jkt,\eta\kappa}_{lsq,\delta\phi} = (-1)^{s^{ij}_m(\alpha)s^{kl}_q(\delta)} \sum_{\epsilon=1}^{N_p^{mq}} \widetilde{F}^{mkn,\beta\chi}_{lpq,\delta\epsilon} \widetilde{F}^{ijm,\alpha\epsilon}_{qps,\delta\gamma}
\label{def:6j_spentagon}
\end{equation}
\end{lemma}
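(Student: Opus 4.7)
The plan is to mimic the derivation of the pentagon equation \eqref{def:6j_pentagon} for fusion categories, tracking the extra signs produced by the super interchange law in $\uC$. The starting point is the pentagon axiom for the (even) associator $a$ of $\uC$ applied to the quadruple $(X_i, X_j, X_k, X_l)$; since $a$ is an even supernatural isomorphism, this is a genuine commutative diagram equating the two composites of associators going around the pentagon, and as in the fusion case it translates into a relation among the matrices of the change-of-basis maps between the bases of $\Hom_{\uC}(((X_i \otimes X_j) \otimes X_k) \otimes X_l, X_p)$ indexed by the five binary trees on the leaves $X_i, X_j, X_k, X_l$.

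For the LHS of \eqref{def:6j_spentagon}, I would factor the three-associator leg of the pentagon through the intermediate trees $(X_i (X_j X_k)) X_l$ and $X_i ((X_j X_k) X_l)$. Each associator in this leg acts on a three-leaf subtree of the four-leaf tree and is therefore resolved by a single application of the definition of fermionic 6$j$-symbols, contributing one factor $\widetilde{F}$. Summing over the intermediate index $t$ and multiplicities $\eta, \varphi, \kappa$ reproduces the triple product on the LHS.

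For the RHS, I would factor the two-associator leg through the intermediate tree $(X_i \otimes X_j) \otimes (X_k \otimes X_l)$, whose basis morphisms have the form $\e^{mq}_p(\epsilon) \circ (\e^{ij}_m(\alpha) \otimes \e^{kl}_q(\delta))$. The first move, corresponding to the associator $a(X_i \otimes X_j, X_k, X_l)$ acting on the three-leaf subtree with inputs $X_m, X_k, X_l$, produces the factor $\widetilde{F}^{mkn,\beta\chi}_{lpq,\delta\epsilon}$ as in the fusion case. To apply the second move, which is the three-leaf $F$-move on inputs $X_i, X_j, X_q$, I would rewrite the tensor product $\e^{ij}_m(\alpha) \otimes \e^{kl}_q(\delta)$ as the composition $(\id_{X_m} \otimes \e^{kl}_q(\delta)) \circ (\e^{ij}_m(\alpha) \otimes \id_{X_k \otimes X_l})$; by the super interchange law,
\begin{equation*}
(\id_{X_m} \otimes \e^{kl}_q(\delta)) \circ (\e^{ij}_m(\alpha) \otimes \id_{X_k \otimes X_l}) = (-1)^{s^{ij}_m(\alpha)\,s^{kl}_q(\delta)} \, \e^{ij}_m(\alpha) \otimes \e^{kl}_q(\delta),
\end{equation*}
and this is the sole source of the sign $(-1)^{s^{ij}_m(\alpha) s^{kl}_q(\delta)}$ on the RHS of \eqref{def:6j_spentagon}.

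The main obstacle will be bookkeeping: naturality of the even associator $a$ (which does not itself introduce signs) must be invoked in the correct order to commute the morphisms $\e^{ij}_m(\alpha)$ and $\e^{kl}_q(\delta)$ past the relevant associators and identities, and one must verify that the super interchange law enters exactly once so that no spurious signs accumulate. Once both expansions of the common composition around the pentagon are reduced to linear combinations of basis morphisms in the tree $((X_i \otimes X_j) \otimes X_k) \otimes X_l$, comparing coefficients yields \eqref{def:6j_spentagon}.
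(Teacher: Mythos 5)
Your proposal is correct, and it supplies a derivation that the paper itself omits: the lemma is stated without proof, with the fermionic pentagon identity attributed to Gu--Wang--Wen, so there is no in-paper argument to compare against. Your sign bookkeeping checks out. Along the three-associator leg every auxiliary tensor factor is an identity (hence even), so the super interchange law never produces a sign and the left-hand side is the naive triple product. Along the two-associator leg, the first $F$-move for the triple $(m,k,l)$ naturally outputs the composite $(\id_{X_m} \otimes \e^{kl}_q(\delta)) \circ (\e^{ij}_m(\alpha) \otimes \id_{X_k \otimes X_l})$, whereas the second $F$-move for $(i,j,q)$ must be fed the opposite order $(\e^{ij}_m(\alpha) \otimes \id_{X_q}) \circ (\id_{X_i \otimes X_j} \otimes \e^{kl}_q(\delta))$; the super interchange law identifies the first with $(-1)^{s^{ij}_m(\alpha) s^{kl}_q(\delta)}\,\e^{ij}_m(\alpha) \otimes \e^{kl}_q(\delta)$ and the second with $\e^{ij}_m(\alpha) \otimes \e^{kl}_q(\delta)$ on the nose, so the sign enters exactly once, as you claim. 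The one point worth making explicit in a written version is that the choice of basis convention for the intermediate tree $(X_iX_j)(X_kX_l)$ drops out (changing it rescales the two $F$-moves by inverse factors), so the relative sign between the two orderings is the only invariant contribution; and that supernaturality of the \emph{even} associator is applied only to morphisms with at most one odd tensor factor, so it indeed contributes no further signs.
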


\begin{example} \label{ex:pted_1}
We say a superfusion category $\uC$ is \emph{pointed} if any simple object $X \in \uC$ is invertible, that is, there
exists $Y \in \uC$ such that $X \otimes Y \simeq Y \otimes X \simeq \textbf{1}$. Let $\uC$ be a Bosonic superfusion
category, and let $G$ be the (finite) group of isomorphism
classes of simple objects in $\uC$, and choose $X_g$, $g \in G$ a set of representatives of simple objects in $\uC$.  
Then $X_{g} \otimes X_{h} \simeq X_{gh}$ for all $g, h \in G$, so admissable
quadruples in $\uC$ are of the form $(g,h,gh,1)$ for all $g, h \in G$.  Let $\omega(g,h)$ denote the parity of the one-dimensional superspace
$\Hom_{\uC}(X_g \otimes X_h, X_{gh})$, then the parity admissability condition \eqref{eq:defparityadmissable} implies
\begin{equation*}
\omega(g,h) + \omega(gh,k) = \omega(h,k) + \omega(g,hk).
\label{eq:parity2cocyclecondition}
\end{equation*}
for all $g, h, k \in G$, so $\omega$ is a 2-cocycle on $G$ with values in $\Z/2\Z$. The super pentagon equation \eqref{def:6j_spentagon} implies
\begin{equation*}
\widetilde{F}(g,h,k) \widetilde{F}(g, hk, l) \widetilde{F}(h,k,l) = (-1)^{\omega(g,h) \omega(k,l)} \widetilde{F}(gh, k,l) \widetilde{F}(g,h,kl)
\label{eq:pted_spe}
\end{equation*}
for all $g, h, k, l \in G$, so following \cite{GWW:2010} we say $\widetilde{F}$ is a \emph{3-supercocycle} on $G$.
\end{example}

\section[Fusion categories over sVect]{Fusion categories over sVect}
In this section, we show that every superfusion category is equivalent to a \emph{$\Pi$-complete superfusion category}
(i.e. a superfusion category equipped with an odd isomorphism $\zeta : \pi \iso \unit$), and give the construction
of the \emph{underlying fusion category} of a $\Pi$-complete superfusion category, following \cite{BE:2016}.

Recall that a fusion category is \emph{braided} if it is equipped with a natural isomorphism $c_{X,Y} : X \otimes Y \iso Y \otimes X$ satisfying well-known axioms, see \cite[Definition 2.1]{JS:BTC}, \cite[Definition 8.1.1]{ENO:2015}.  A monoidal
functor between braided fusion categories is \emph{braided} if it respects the braiding, see \cite[Definition 2.3]{JS:BTC}, \cite[Definition 8.1.7]{ENO:2015}.  

\begin{definition}[{\cite[Definition 7.13.1]{ENO:2015}}] \label{def:drinfeldcentre}
The centre of a fusion category $\uA$ is the category $\mathcal{Z}(\uA)$ whose
objects are pairs $(Z,\beta)$ where $Z \in \uA$ and
\begin{equation*}
\beta_X : X \otimes Z \iso Z \otimes X,\ X \in \uA
\end{equation*}
is a natural isomorphism such that the following diagram
\begin{center}
\includegraphics{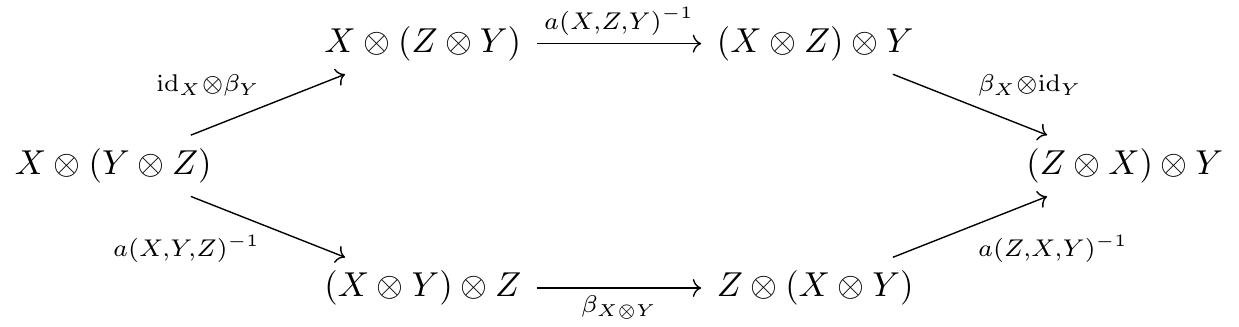}
\end{center}
is commutative for all $X, Y \in \uA$.

A morphism from $(Z,\beta)$ to $(Z',\beta')$ is a morphism $f \in \Hom_{\uA}(Z,Z')$ such that
\begin{equation*}
(f \otimes \id_X) \circ \beta_X = \beta_X' \circ (\id_X \otimes f)
\end{equation*}
for all $X \in \uA$.
\end{definition}
Equipping $\mathcal{Z}(\uA)$ with the usual tensor product (see \cite[Definition 7.13.1]{ENO:2015}) and braiding
$c_{(Z,\beta),(Z',\beta')} := \beta'_Z$ makes $\mathcal{Z}(\uA)$ into a braided fusion category, see \cite[Proposition 8.5.1 and Theorem 9.3.2]{ENO:2015}.

\begin{definition}[{\cite[Definition 4.16]{DGNO:BFC}}]\label{def:fcoverusvec}
A fusion category over $\usvec$ is a fusion category $\uA$ equipped with a braided
functor $\usvec \to \mathcal{Z}(\uA)$.  Equivalently, this is an object $(\pi,\beta)$ in the centre
$\mathcal{Z}(\uA)$ together with an even isomorphism $\xi : \pi \otimes \pi \iso \unit$ such that
\begin{equation}
(\xi^{-1} \otimes \id_X) \circ l^{-1}_X \circ r_X \circ (\id_X \otimes \xi) = a(\pi,\pi,X)^{-1} \circ (\id_\pi \otimes \beta_X) \circ a(\pi,X,\pi) \circ (\beta_X \otimes \id_\pi) \circ a(X,\pi,\pi)^{-1}
\label{eq:fcoverxi}
\end{equation}
for all $X \in \uA$, and
\begin{equation}
\beta_\pi = -\id_{\pi \otimes \pi} \in \Hom_{\uA}(\pi \otimes \pi, \pi \otimes \pi).
\label{eq:fcovergamma}
\end{equation}
In this situation we say $(\uA,\pi,\beta,\xi)$ is a \emph{fusion category over $\usvec$}.
\end{definition}
In the language of \cite{BE:2016},  the quadruple $(\uA,\pi,\beta,\xi)$ is an example of a \emph{monoidal $\Pi$-category}.

\begin{remark}
In this section, we will often draw commutative diagrams with associativity and unit isomorphisms omitted, unless confusion is possible.
For example, we represent Equation \eqref{eq:fcoverxi} as the diagram
\begin{center}
\includegraphics{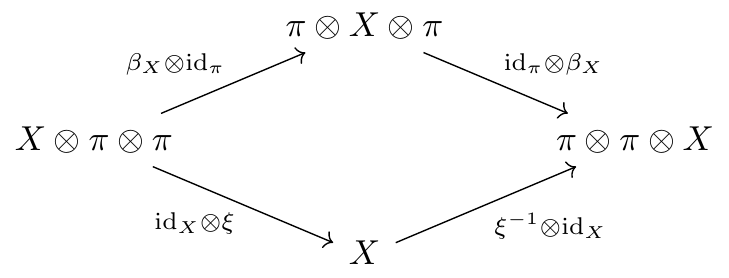}
\end{center}
In addition we say that the diagram
\begin{center}
\includegraphics{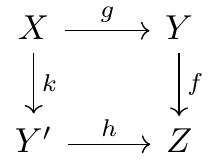}
\end{center}
is \emph{supercommutative} if $h \circ k = - f \circ g$.
\end{remark}

\subsection{The $\Pi$-envelope of a superfusion category}
\begin{definition}
Let $\uC$ be a superfusion category, together with an object $\pi$
and an odd isomorphism $\zeta : \pi \iso \unit$.  In this situation, we say that $(\uC,\pi,\zeta)$ is a \emph{$\Pi$-complete superfusion category}.
\end{definition}

In particular, this implies that every object in $\uC$ is the target of an odd isomorphism.  It turns out that every superfusion category is equivalent to a $\Pi$-complete superfusion category, by the following construction described in \cite{BE:2016}.

\begin{definition}[{see \cite[Definition 1.16]{BE:2016}}] \label{def:pienvelope}
Let $\uC$ be a superfusion category. The \emph{$\Pi$-envelope} of $\uC$ is the rigid monoidal supercategory $\uC_\pi$
with objects of the form $X^{\underline{a}}$, where $X \in \uC$ and $\underline{a} \in \Z/2\Z$, and morphisms defined by
\[
\Hom_{\uC_\pi}(X^{\underline{a}},Y^{\underline{b}})_{\underline{c}} := \Hom_{\uC}(X,Y)_{\underline{a}+\underline{b}+\underline{c}}
\]
If $f : X \to Y$ is a homogeneous morphism in $\uC$ with parity $|f|$, then let $f_{\underline{a}}^{\underline{b}}$ denote the corresponding
morphism $X^{\underline{a}} \to Y^{\underline{b}}$ which has parity $\underline{a} + \underline{b} + |f|$ in $\uC_\pi$.  The composition
in $\uC_\pi$ is induced by the composition in $\uC$, and the tensor proper of objects and morphisms is defined by
\begin{align*}
X^{\underline{a}} \otimes Y^{\underline{b}} &:= (X \otimes Y)^{\underline{a}+\underline{b}}\\
f_{\underline{a}}^{\underline{b}} \otimes g_{\underline{c}}^{\underline{d}} &:= (-1)^{(\underline{c}+\underline{d}+|g|)+d|f|} (f \otimes g)_{\underline{a}+\underline{c}}^{\underline{b}+\underline{d}}
\end{align*}
The unit object of $\uC_\pi$ is $\eunit$, and the maps $a, l,$ and $r$ in $\uC$ extend to $\uC_\pi$ in the obvious way.  
The left dual of an object $\xa \in \uC_\pi$ is given by $(X^*)^{\underline{a}}$, where evaluation
and coevaluation morphisms are given by
\[
\ev_{X^{\underline{a}}} := (\ev_X)_{\underline{0}}^{\underline{0}} : (X^*)^{\underline{a}} \otimes X^{\underline{a}} \to \unit^{\underline{0}}
\]
and
\[
\coev_{X^{\underline{a}}} := (\coev_X)_{\underline{0}}^{\underline{0}} : \unit^{\underline{0}} \to X^{\underline{a}} \otimes (X^*)^{\underline{a}}
\]
Similarly, the right dual of $X^{\underline{a}} \in \uC_\pi$ is $(^{*}X)^{\underline{a}} \in \uC_\pi$,
where $\ev'_{X^{\underline{a}}} := (\ev'_X)^{\underline{0}}_{\underline{0}}$ and $\coev'_{X^{\underline{a}}} := (\coev'_X)^{\underline{0}}_{\underline{0}}$.
\end{definition}
The functor $J : \uC \to \uC_\pi$
sending $X \mapsto X^{\underline{0}}$ and $f \mapsto (f)_{\underline{0}}^{\underline{0}}$ is full, faithful,
and essentially surjective, so $\uC$ and $\uC_\pi$ are equivalent as superfusion categories.  However $J$ need not be a \emph{superequivalence} in general, indeed, in \cite[Lemma 4.1]{BE:2016} it is shown that $J$ is a superequivalence if and only if $\uC$ is $\Pi$-complete.

\begin{definition} \label{def:saenvelope}
The \emph{superadditive envelope} $\uCpi$ of a superfusion category $\uC$ is the superfusion category obtained by taking the additive envelope of the $\Pi$-envelope of $\uC$.
\end{definition}
In $\uCpi$ we have the odd isomorphism $\zeta := (\id_\unit)_{\underline{1}}^{\underline{0}} : \unit^{\underline{1}} \to \unit^{\underline{0}}$,
so $(\uCpi, \unit^{\underline{1}},\zeta)$ is a $\Pi$-complete superfusion category.

\subsection{The underlying fusion category of a \texorpdfstring{$\Pi$}{Pi}-complete superfusion category}
\begin{definition}
Let $(\uL,\pi,\zeta)$ be a $\Pi$-complete superfusion category.  The \emph{underlying fusion category} $\uuL$ 
of $\uL$ is the fusion category with the same objects as $\uL$, but only the even morphisms.
\end{definition}

Since $(\uL,\pi,\zeta)$ is $\Pi$-complete, we can endow $\uuL$ with the structure of a fusion category over $\usvec$.
Indeed, define the even supernatural transformation ${\beta : - \otimes \pi \iso \pi \otimes -}$
by letting $\beta_X$ be the composition

\begin{equation*}
X \otimes \pi \xrightarrow{\id_X \otimes \zeta} X \otimes \unit \xrightarrow{r_X} X \xrightarrow{l_X^{-1}} \unit \otimes X \xrightarrow{\zeta^{-1} \otimes \id_X} \pi \otimes X
\end{equation*}
for $X \in \uuL$.  It is straightforward to check that $\beta$ is an even supernatural transformation,
and that $(\pi,\beta)$ is an object of the centre $\mathcal{Z}(\uuL)$ of $\uuL$.  Let $\xi = l_1 \circ (\zeta \otimes \zeta) : \pi \otimes \pi \iso \unit$, then $\xi$ is even and thus may be viewed as an isomorphism $\pi \otimes \pi \iso \unit$ in $\uuL$.
The following lemma is a special case of \cite[Lemma 3.2]{BE:2016}.

\begin{lemma}
$(\uuL,\pi,\beta,\xi)$ is a fusion category over $\usvec$.
\label{lem:fcatusvec}
\end{lemma}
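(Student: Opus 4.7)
The plan is to verify the conditions of Definition \ref{def:fcoverusvec}. The discussion preceding the lemma already establishes that $(\pi, \beta)$ lies in $\mathcal{Z}(\uuL)$ and that $\xi$ is an even isomorphism $\pi \otimes \pi \iso \unit$ in $\uuL$ (evenness of $\xi$ uses the super interchange law, which gives $|\zeta \otimes \zeta| = 2|\zeta| \equiv 0 \pmod 2$). It therefore remains to verify Equations (\ref{eq:fcoverxi}) and (\ref{eq:fcovergamma}).

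For Equation (\ref{eq:fcoverxi}), I would recast it as the assertion that $\xi$ defines a morphism in $\mathcal{Z}(\uuL)$ from the tensor square $(\pi \otimes \pi, \beta^{(2)})$ (with $\beta^{(2)}$ given by the right-hand side of (\ref{eq:fcoverxi})) to the unit object $(\unit, \mathrm{id})$. Substituting $\xi = l_\unit \circ (\zeta \otimes \zeta)$ and $\beta_X = (\zeta^{-1} \otimes \id_X) \circ l_X^{-1} \circ r_X \circ (\id_X \otimes \zeta)$, both sides unwind into diagrams in $\uL$ built from $\zeta$, $\zeta^{-1}$, $l$, $r$, and the associator. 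The equality then follows from a direct diagram chase using naturality of $l$ and $r$ applied to the odd morphism $\zeta$, the triangle axiom, and the identity $l_\unit = r_\unit$; no additional sign arises since the two occurrences of $\zeta$ in $\xi$ enter symmetrically on both sides.

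For Equation (\ref{eq:fcovergamma}), I isolate the sign produced by the super interchange law. Expanding $\beta_\pi = (\zeta^{-1} \otimes \id_\pi) \circ l_\pi^{-1} \circ r_\pi \circ (\id_\pi \otimes \zeta)$, I would first establish the auxiliary identity $r_\pi \circ (\id_\pi \otimes \zeta) = -\, l_\pi \circ (\zeta \otimes \id_\pi)$ as odd morphisms $\pi \otimes \pi \to \pi$ in $\uL$. Composing each side with $\zeta$ on the left, naturality of $r$ and $l$ applied to the odd morphism $\zeta : \pi \to \unit$ transforms them into $r_\unit \circ (\zeta \otimes \id_\unit) \circ (\id_\pi \otimes \zeta)$ and $l_\unit \circ (\id_\unit \otimes \zeta) \circ (\zeta \otimes \id_\pi)$ respectively. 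The super interchange law gives $(\zeta \otimes \id_\unit) \circ (\id_\pi \otimes \zeta) = \zeta \otimes \zeta$ on the one hand, and $(\id_\unit \otimes \zeta) \circ (\zeta \otimes \id_\pi) = (-1)^{|\zeta|^2}\, \zeta \otimes \zeta = -\, \zeta \otimes \zeta$ on the other; together with $l_\unit = r_\unit$ and invertibility of $\zeta$, the claimed sign identity follows. Substituting back and using $(\zeta^{-1} \otimes \id_\pi) \circ (\zeta \otimes \id_\pi) = \id_{\pi \otimes \pi}$ (no sign, since $|\id_\pi| = 0$) yields $\beta_\pi = -\id_{\pi \otimes \pi}$.

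The main obstacle is disciplined bookkeeping of signs under the super interchange law; once the single nontrivial sign is correctly localized to the step $(\id_\unit \otimes \zeta) \circ (\zeta \otimes \id_\pi) = -\, \zeta \otimes \zeta$, the remainder of the argument is formal and follows from the monoidal coherence of $\uL$.
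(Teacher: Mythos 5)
Your treatment of Equation \eqref{eq:fcovergamma} is correct and is essentially the paper's argument: the paper likewise composes with $\id_\pi \otimes \zeta$, uses naturality of $l$ and $r$, and extracts the two signs $(\id_\unit \otimes \zeta) \circ (\zeta \otimes \id_\pi) = -\,\zeta \otimes \zeta$ and $(\zeta \otimes \id_\unit) \circ (\id_\pi \otimes \zeta) = \zeta \otimes \zeta$ from the super interchange law. The overall setup (reducing the lemma to \eqref{eq:fcoverxi} and \eqref{eq:fcovergamma}, and taking $(\pi,\beta) \in \mathcal{Z}(\uuL)$ and the evenness of $\xi$ as already established) also matches the paper.

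The gap is in your verification of \eqref{eq:fcoverxi}. You assert that the diagram chase closes with ``no additional sign'' because ``the two occurrences of $\zeta$ in $\xi$ enter symmetrically on both sides.'' In a supercategory that heuristic is unsound --- symmetry between two \emph{odd} morphisms is precisely where a factor $(-1)^{|\zeta||\zeta|} = -1$ is produced --- and in this computation signs genuinely do arise; the content of the proof is that they cancel. Concretely: (i) inverting $\xi = l_\unit \circ (\zeta \otimes \zeta)$ gives $\xi^{-1} = -(\zeta^{-1} \otimes \zeta^{-1}) \circ l_\unit^{-1}$, since $(\zeta \otimes \zeta) \circ (\zeta^{-1} \otimes \zeta^{-1}) = -\id_{\unit \otimes \unit}$ by the super interchange law; (ii) on the right-hand side of \eqref{eq:fcoverxi} the two copies of $\beta_X$ each contribute one $\zeta$ and one $\zeta^{-1}$, and assembling these into $\zeta \otimes \zeta$ and $\zeta^{-1} \otimes \zeta^{-1}$ requires sliding odd morphisms on disjoint tensor factors past one another, each such interchange costing a further $-1$; (iii) the naturality square for the \emph{odd} morphism $\zeta^{-1}$ against the odd morphism $r_X \circ (\id_X \otimes \zeta)$ supercommutes rather than commutes. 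The paper's proof records exactly these sources of sign (one commuting and one supercommuting triangle from the super interchange law, plus a supercommuting rectangle from supernaturality of $\zeta^{-1}$) and verifies that their product is $+1$. As written, your argument would go through unchanged if any one of these signs were absent, so it cannot distinguish the true identity from a false one; you need to carry out the sign bookkeeping for \eqref{eq:fcoverxi} with the same care you applied to \eqref{eq:fcovergamma}.
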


\begin{proof}
We must show that equations \eqref{eq:fcoverxi} and \eqref{eq:fcovergamma} hold.  For the former,
observe that $\xi^{-1} = -(\zeta^{-1} \otimes \zeta^{-1}) \circ l_1^{-1}$ by the super interchange law, so it is enough to show that the following diagram

\begin{center}
\includegraphics{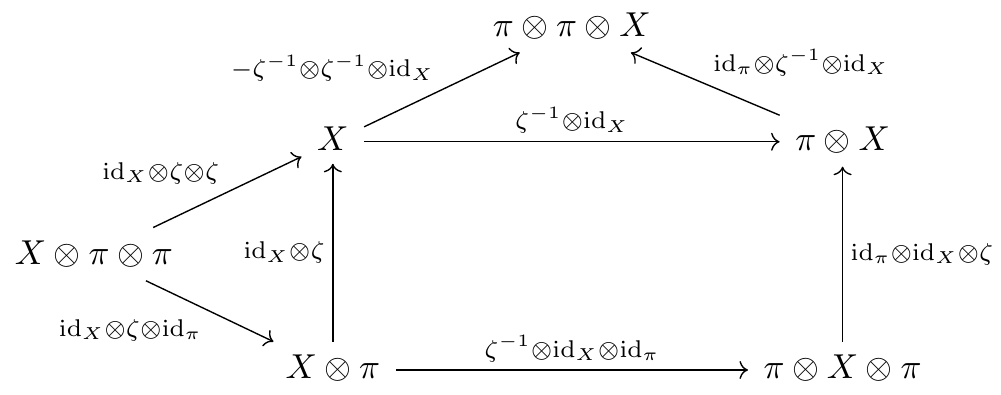}
\end{center}
is commutative.  The super interchange law implies that the top (respectively left) triangle commutes (respectively supercommutes), while supernaturality of $\zeta^{-1}$ implies the rectangle supercommutes, and so the diagram is commutative.  For \eqref{eq:fcovergamma}, consider the following diagram.
\begin{center}
\includegraphics{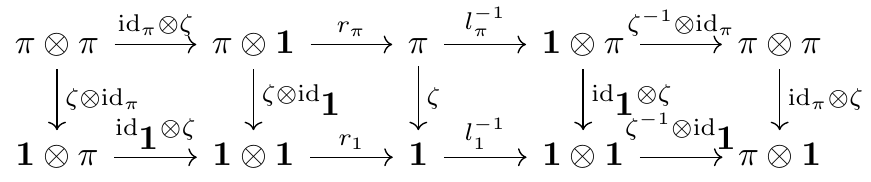}
\end{center}
The first and last cell supercommute by the super interchange law, while the middle cells commute by naturality of $l$ and $r$.  Thus
\begin{equation*}
(\id_\pi \otimes \zeta) \circ \beta_\pi = \left[-(\zeta^{-1} \otimes \id_\unit)\right] \circ \left[-(\id_\unit \otimes \zeta) \circ (\zeta \otimes \id_\pi)\right]
\end{equation*}
and so $\beta_\pi = (\id_\pi \otimes \zeta^{-1}) \circ (\zeta^{-1} \otimes \id_\unit) \circ (\id_\unit \otimes \zeta) \circ (\zeta \otimes \id_\pi) = -\id_{\pi \otimes \pi}$ by the super interchange law.
\end{proof}
Thus to every $\Pi$-complete superfusion category there is a corresponding fusion category over $\usvec$.
We now present the inverse construction, as given in \cite[\S 5]{BE:2016}.

\begin{definition}
Let $(\uA,\pi,\beta,\xi)$ be a fusion category over $\usvec$.  The \emph{associated superfusion category} $\widehat{\uA}$ is the $\Pi$-complete superfusion category with the same objects as $\uA$, but with morphisms defined by
\begin{equation*}
\Hom_{\widehat{\uA}}(X,Y)_0 := \Hom_{\uA}(X,Y) \text{ and } \Hom_{\widehat{\uA}}(X,Y)_1 := \Hom_{\uA}(X, \pi \otimes Y)
\end{equation*}
Let $f \in \Hom_{\widehat{\uA}}(X,Y)$ and $g \in \Hom_{\widehat{\uA}}(Y,Z)$ be homogeneous morphisms in $\uA$, then
their composition $g \widehat{\circ} f$ in $\widehat{\uA}$ is defined in the obvious way, except when
$f$ and $g$ are both odd, in which case $g \widehat{\circ} f$ is induced by the composition

\begin{equation*}
X \xrightarrow{f} \pi \otimes Y \xrightarrow{\id_\pi \otimes g} \pi \otimes (\pi \otimes Z) \xrightarrow{a(\pi,\pi,Z)^{-1}} (\pi \otimes \pi) \otimes Z \xrightarrow{\xi \otimes \id_Z} \unit \otimes Z \xrightarrow{l_Z} Z.
\end{equation*}
The tensor product of objects in $\widehat{\uA}$ is identical to that in $\uA$.  If $f \in \Hom_{\widehat{\uA}}(W,Y)$, $g \in \Hom_{\widehat{\uA}}(X,Z)$ are homogeneous morphisms, then their tensor product $f \widehat{\otimes} g : W \otimes X \to Y \otimes Z$ is defined as follows:

\begin{itemize}
\item If $f$ and $g$ are both even, let $f \widehat{\otimes} g := f \otimes g$.
\item If $f$ is even and $g$ is odd, let $f \widehat{\otimes} g := a(\pi,Y,Z) \circ (\beta_Y \otimes \id_Z) \circ a(Y,\pi,Z)^{-1} \circ f \otimes g$.
\item If $f$ is odd and $g$ is even, let $f \widehat{\otimes} g := a(\pi,Y,Z) \circ (f \otimes g)$.
\item If $f$ and $g$ are both odd, let $f \widehat{\otimes} g$ be induced by the composition

\begin{equation*}
W \otimes X \xrightarrow{f \otimes g} \pi \otimes Y \otimes \pi \otimes Z \xrightarrow{\id_\pi \otimes \beta_Y \otimes \id_Z} \pi \otimes \pi \otimes Y \otimes Z \xrightarrow{-\xi \otimes \id_{Y \otimes Z}} Y \otimes Z
\end{equation*}
where we have suppressed associativity and unit isomorphisms for brevity.  

\end{itemize}
\end{definition}
One can check on a case by case basis that the composition defined in $\widehat{\uA}$ is associative.  The most interesting case is when $f \in \Hom_{\widehat{\uA}}(W,X)_1$, $g \in \Hom_{\widehat{\uA}}(X,Y)_1$ and $h \in \Hom_{\widehat{\uA}}(Y,Z)_1$
are homogeneous odd morphisms.  In this case, it suffices to show that the following diagram
\begin{center}
\includegraphics{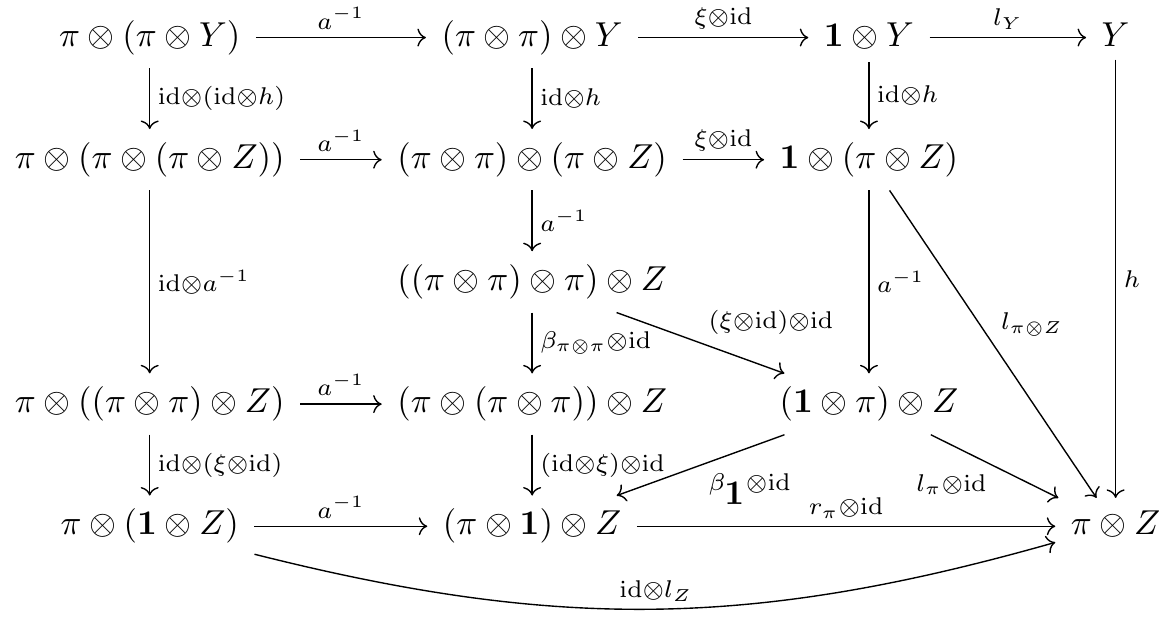}
\end{center}
is commutative.  To see this, observe that $\beta_{\pi \otimes \pi} = a(\pi,\pi,\pi)$ by Equation \eqref{eq:fcovergamma}, and so the 5-sided cell
commutes by the pentagon axiom.  In addition, we have used that $r_\pi \circ \beta_\unit = l_\pi$ for commutativity of the bottom right triangle.
All other cells commute by naturality or the triangle axiom.

It is a similar exercise to check that the tensor product defined in $\widehat{\uA}$ satisfies the super interchange law.  As before, the most interesting
case is when $f \in \Hom_{\widehat{\uA}}(W,Y)_1$, $g \in \Hom_{\widehat{\uA}}(X,Z)_1$, $h \in \Hom_{\widehat{\uA}}(A,W)_1$, and $k \in \Hom_{\widehat{\uA}}(B,X)_1$
are odd homogeneous morphisms, in which case we must show that $(f \widehat{\otimes} g) \widehat{\circ} (h \widehat{\otimes} k) = - (f \widehat{\circ} h) \widehat{\otimes} (g \widehat{\circ} k)$.  This reduces to showing that the following diagram

\begin{center}
\includegraphics{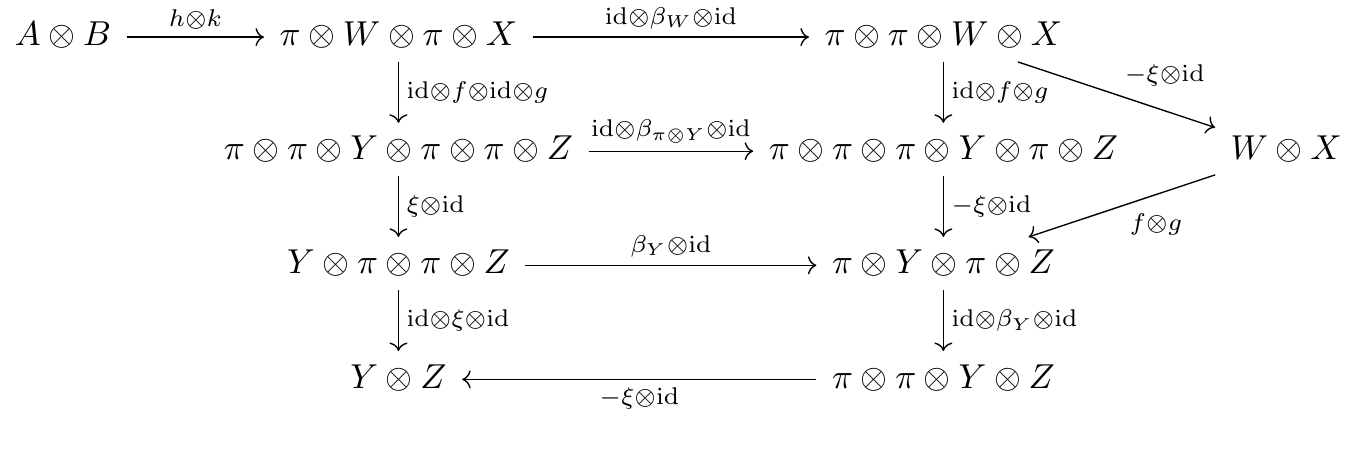}
\end{center}
is supercommutative. The top cell commutes by naturality of $\beta$, while the right cell commutes
by naturality of $-\xi$.  Since $(\pi,\beta)$ is in the centre $\mathcal{Z}(\uA)$, the diagram

\begin{center}
\includegraphics{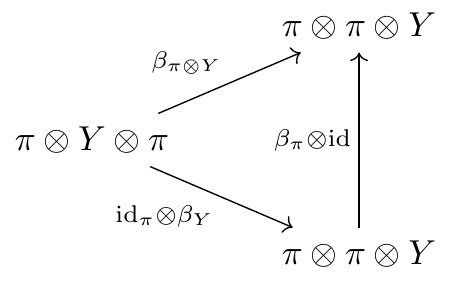}
\end{center}
is commutative.  Recalling that $\beta_\pi = -\id_{\pi \otimes \pi}$, we get that the middle cell commutes
by naturality of $\beta$.  The bottom cell supercommutes by comparison with Equation \eqref{eq:fcoverxi}.

\begin{lemma}[{\cite[Lemma 5.4]{BE:2016}}]
Let $(\uL,\pi,\zeta)$ be a $\Pi$-complete superfusion category, and define
\begin{align*}
G : \whuL &\to \uL \\
X &\mapsto X\\
f \in \Hom_{\whuL}(X,Y) &\mapsto \begin{cases}f \in \Hom_{\uL}(X,Y)_0 & \text{if } f \text{ even}\\
l_Y \circ (\zeta \otimes \id_Y) \circ f \in \Hom_{\uL}(X,Y)_1 & \text{if } f \text{ odd}
\end{cases}
\end{align*}
where $f$ is a homogeneous morphism.  Then $G$ is an isomorphism of superfusion categories.
\label{lem:assocup}
\end{lemma}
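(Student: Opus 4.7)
The plan is to verify that $G$ is bijective on objects (immediate, since it is the identity), bijective on morphism spaces, and compatible with composition, unit, and tensor product; together these make $G$ a strict isomorphism of monoidal supercategories, hence of superfusion categories. For morphism bijectivity, $G$ is the identity on the even subspaces $\Hom_\uL(X,Y)_0 = \Hom_{\uuL}(X,Y) = \Hom_{\whuL}(X,Y)_0$; on the odd subspaces $\Hom_{\whuL}(X,Y)_1 = \Hom_{\uuL}(X,\pi \otimes Y)$ the candidate inverse $g \mapsto (\zeta^{-1} \otimes \id_Y) \circ l_Y^{-1} \circ g$ lands in even morphisms of $\uL$ by parity count and is manifestly a two-sided inverse because $\zeta$ is an isomorphism and $l$ is invertible.

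Compatibility with composition proceeds by a four-way case analysis on the parities of $f$ and $g$. The even-even case is tautological, and the two mixed cases reduce to a single application of naturality of $l$. The odd-odd case is the only subtle one: writing $f : X \to \pi \otimes Y$ and $g : Y \to \pi \otimes Z$ for the even morphisms in $\uL$ representing odd morphisms in $\whuL$, the formula
\[
g \,\widehat{\circ}\, f = l_Z \circ (\xi \otimes \id_Z) \circ a(\pi,\pi,Z)^{-1} \circ (\id_\pi \otimes g) \circ f
\]
must be matched against $G(g) \circ G(f) = l_Z \circ (\zeta \otimes \id_Z) \circ g \circ l_Y \circ (\zeta \otimes \id_Y) \circ f$. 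Substituting $\xi = l_\unit \circ (\zeta \otimes \zeta)$ from the construction preceding Lemma \ref{lem:fcatusvec} and applying the triangle axiom together with naturality of $l$ reduces both sides to a common composite of unitors, associators, and two copies of $\zeta$.

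Compatibility with the tensor product requires the same four-case analysis and is the principal obstacle. In the odd-odd case the defining composite for $f \,\widehat{\otimes}\, g$ uses both the half-braiding $\beta$ and the explicit sign $-\xi$, and to match it against $G(f) \otimes G(g)$ (computed in $\uL$ via the super interchange law) one substitutes $\beta_X = (\zeta^{-1} \otimes \id_X) \circ l_X^{-1} \circ r_X \circ (\id_X \otimes \zeta)$ and $\xi = l_\unit \circ (\zeta \otimes \zeta)$ into the definition. The resulting diagram contains several copies of $\zeta$ that must be commuted past each other and past the unitors; the crucial observation is that moving a pair of odd morphisms past each other via the super interchange law contributes exactly the sign that cancels the explicit $-$ in the definition of $\widehat{\otimes}$. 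After this cancellation every remaining identity is a consequence of coherence in $\uL$. Thus the bulk of the work is the sign-and-diagram bookkeeping in the two odd-odd cases; once $\beta$ and $\xi$ are unwound in terms of $\zeta$, only naturality of $l,r,a$ and the super interchange law are required.
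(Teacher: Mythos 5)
Your proposal is correct and follows essentially the same route as the paper: reduce to the odd--odd cases for both composition and tensor product, unwind $\xi = l_{\unit}\circ(\zeta\otimes\zeta)$ and $\beta$ in terms of $\zeta$, and let the sign produced by the super interchange law (equivalently, supernaturality of $\zeta$ when sliding the odd morphisms past one another) cancel the explicit $-$ in the definitions of $\widehat{\circ}$ and $\widehat{\otimes}$, with everything else following from coherence. The only addition over the paper's proof is your explicit two-sided inverse on odd morphism spaces, which the paper simply asserts.
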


\begin{proof}
Observe that $G$ is a bijection on objects and morphisms, so it remains to show that $G$
respects composition and the tensor product.  Let $f \in \Hom_{\whuL}(X,Y)$ and $g \in \Hom_{\whuL}(Y,Z)$ be homogeneous morphisms in $\uL$.  We only consider the most interesting case when $f$ and $g$ are both odd.  In this case, $G(g) \circ G(f)$ is given by the composition

\[
X \xrightarrow{f} \pi \otimes Y \xrightarrow{\zeta \otimes \id_Y} \unit \otimes Y \xrightarrow{l_Y} Y \xrightarrow{g} \pi \otimes Z \xrightarrow{\zeta \otimes \id_Z} \unit \otimes Z \xrightarrow{l_Z} Z
\]
while $G(g \widehat{\circ} f)$ is given by the composition
\[
X \xrightarrow{f} \pi \otimes Y \xrightarrow{\id_\pi \otimes g} \pi \otimes (\pi \otimes Z) \xrightarrow{a(\pi,\pi,Z)^{-1}} (\pi \otimes \pi) \otimes Z \xrightarrow{\xi \otimes \id_Z} \unit \otimes \id_Z \xrightarrow{l_Z} Z
\]
and we must show these are equal.  Indeed, $g \circ l_Y \circ (\zeta \otimes \id_Y) = -l_{\pi \otimes Z} \circ (\zeta \otimes \id_{\pi \otimes Z}) \circ (\id_\pi \otimes g)$ by supernaturality, and so it remains to show that the following diagram

\begin{center}
\includegraphics{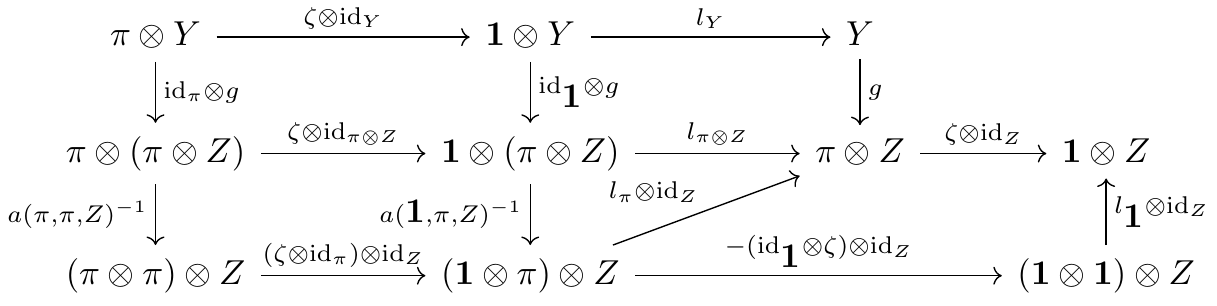}
\end{center}
is commutative, where we have used that $\xi = - l_\unit \circ (\id_\unit \otimes \zeta) \circ (\zeta \otimes \id_\pi)$.  The top left
cell supercommutes by supernaturality of $\zeta$, the top right cell commutes by naturality of $l$, the bottom left cell commutes by naturality of $a$, the bottom triangle commutes by the triangle axiom, and the bottom right cell supercommutes by naturality of $\zeta$.
Thus $G(g \widehat{\circ} f) = G(g) \circ G(f)$.

Similarly, one can check that if $f \in \Hom_{\whuL}(W,Y)$ and $g \in \Hom_{\whuL}(X,Z)$ then $G(f \widehat{\otimes} g) = G(f) \otimes G(g)$.  As before, we only consider the case where $f$ and $g$ are homogeneous odd morphisms.  In this case, we have
\begin{align*}
G(f) \otimes G(g) &= \left(l_Y \circ (\zeta \otimes \id_Y) \circ f\right) \otimes \left(l_Z \circ (\zeta \otimes \id_Z) \circ g\right)\\
									&= - (l_Y \otimes l_Z) \circ ((\zeta \otimes \id_Y) \otimes (\zeta \otimes \id_Z)) \circ (f \otimes g)
\end{align*}
by the super interchange law.  By comparing this with the definition of $G(f \widehat{\otimes} g)$, it suffices to show that the following diagram

\begin{center}
\includegraphics{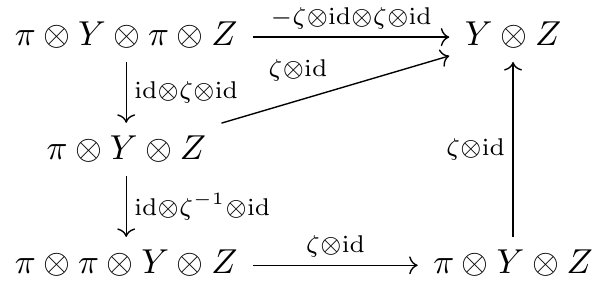}
\end{center}
is commutative, where we have again used that $-\xi = l_\unit \circ (\id_\unit \otimes \zeta) \circ (\zeta \otimes \id_\pi)$.
The top triangle supercommutes by the super interchange law, while the bottom cell supercommutes by supernaturality of $\zeta$, and so $G(f \widehat{\otimes} g) = G(f) \otimes G(g)$. 
\end{proof}

Thus every $\Pi$-complete superfusion category can be obtained from a fusion category over $\usvec$
by the above construction.  In fact, it is shown in \cite{BE:2016} that the functor $G$
 described above forms part of an equivalence between the category of fusion categories over $\usvec$ and the category
of $\Pi$-complete superfusion categories.

\section{The underlying fusion category}

\begin{definition}
Let $\uC$ be a superfusion category, and let $\euCpi$ be the underlying fusion category of the superadditive envelope of $\uC$ (see Definition \ref{def:pienvelope} and Definition \ref{def:saenvelope}).
We call $\euCpi$ the \emph{underlying fusion category} of $\uC$.
\end{definition}

In this section, we give an explicit formula for the 6$j$-symbols of $\euCpi$ in terms of the fermionic 6$j$-symbols of $\uC$.   Recall that for $X, Y \in \uC$ and $\underline{a}, \underline{b} \in \Z/2\Z$, we have
\[
\Hom_{\euCpi}(X^{\underline{a}},Y^{\underline{b}}) = \Hom_{\uC}(X,Y)_{\underline{a}+\underline{b}}
\]
If $f : X \to Y$ is a homogeneous morphism in $\uC$ and $\underline{a} + \underline{b} = |f|$,
then we denote by $f_{\underline{a}}^{\underline{b}}$ the corresponding morphism $X^{\underline{a}} \to Y^{\underline{b}}$ in $\euCpi$.
The tensor product of objects and morphisms in $\euCpi$ is defined by
\begin{align*}
X^{\underline{a}} \otimes Y^{\underline{b}} &:= (X \otimes Y)^{\underline{a}+\underline{b}}\\
f_{\underline{a}}^{\underline{b}} \otimes g_{\underline{c}}^{\underline{d}} &:= (-1)^{\underline{d}|f|} (f \otimes g)_{\underline{a}+\underline{c}}^{\underline{b}+\underline{d}}
\end{align*}

From Lemma \ref{lem:fcatusvec} we get that $(\euCpi, \ounit, \beta, \xi)$ is a fusion category over $\usvec$, where
\begin{equation*}
\beta_{\xa} = (-1)^{\underline{a}} \cdot (l_X^{-1} \circ r_X)_{\underline{a}+1}^{\underline{a}+1} : \xa \otimes \ounit \xrightarrow{\sim}  \ounit \otimes \xa,\ \xa \in \euCpi\end{equation*}

and
\begin{equation*}
\xi = (l_1)_{\underline{0}}^{\underline{0}} : \ounit \otimes \ounit \xrightarrow{\sim} \eunit
\end{equation*}
Let $X_i$, $i \in I$ be a set of representatives of isomorphism classes\footnote{We say that two objects in a superfusion category lie in the same isomorphism class if there is a (not necesssarily even) isomorphism between them.}
of simple objects in a superfusion
category $\uC$.  Define
\begin{equation}
J = \{ (i, \underline{a}) \in I \times \Z/2\Z \text{ such that } \underline{a} = 0 \text{ if $X_i$ is Majorana}\}
\label{eq:jlabel}
\end{equation}
We denote the element $(i,\underline{a}) \in J$ by $i^{\underline{a}}$.  The isomorphism
classes of simple objects in $\euCpi$ are labelled by $J$.  Indeed, suppose $X_i$ is Bosonic,
then we have a pair of non-isomorphic simple objects $X_i^{\underline{0}}$ and $X_i^{\underline{1}}$
in $\euCpi$ corresponding to the labels $i^{\underline{0}}$ and $i^{\underline{1}}$ respectively.  If $X_i$
is Majorana, then $X_i^{\underline{0}}$ and $X_i^{\underline{1}}$ are isomorphic in $\euCpi$, so
we choose $X_i^{\underline{0}}$ as our representative simple object, and label it by $i^{\underline{0}}$.

\begin{remark}
If $\uC$ is a Bosonic superfusion category, then the underlying fusion category $\euCpi$
has twice as many simple objects (up to isomorphism) as $\uC$, labelled by $J = I \times \Z/2\Z$.
\end{remark}

\begin{example} \label{ex:pted_2}
Let $\uC$ be a Bosonic pointed superfusion category, as in Example \ref{ex:pted_1}.  
The underlying fusion category $\euCpi$ is pointed, so let $G_\omega$ denote the (finite) group
of isomorphism classes of simple objects in $\euCpi$.  As a set, we have $G_\omega = \Z/2\Z \times G$, though we would like
to describe the group structure on $G_\omega$.  The isomorphisms $\e(g,h) : X_g \otimes X_h \iso X_{gh}$ in $\uC$ induce isomorphisms in $\euCpi$
\begin{equation*}
\e(g^{\underline{a}},h^{\underline{b}}) = \left(\e(g,h)\right)_{\underline{a}+\underline{b}}^{\underline{a}+\underline{b}+\omega(g,h)} : X_g^{\underline{a}} \otimes X_h^{\underline{b}} \iso X_{gh}^{\underline{a}+\underline{b}+\omega(g,h)}
\end{equation*}
for all $g^{\underline{a}}, h^{\underline{b}} \in G_\omega$, and so the group structure on $G_\omega$ is given by
\begin{equation*}
g^{\underline{a}} \cdot h^{\underline{b}} := (gh)^{\underline{a}+\underline{b}+\omega(g,h)}
\label{eq:pted_gpcentralextension}
\end{equation*}
and so $G_{\omega}$ is the central extension of $G$ by $\Z/2\Z$ determined by the 2-cocycle $\omega$.
\end{example}

\subsection{6j symbols in \texorpdfstring{$\euCpi$}{the underlying fusion category}}
Let $\uC$ be a superfusion category, and let $J$ label the simple objects in $\euCpi$, as described in \eqref{eq:jlabel}.  
Let $i^{\underline{a}}, j^{\underline{b}}, m^{\underline{c}} \in J$, and suppose that $(i,j,m,\alpha)$ is an admissable quadruple in $\uC$. 
If $\underline{c} = \underline{a} + \underline{b} + s^{ij}_m(\alpha)$ then $\e^{ij}_m : X_i \otimes X_j \to X_m$
induces a morphism
\[
X_i^{\underline{a}} \otimes X_j^{\underline{b}} \to X_m^{\underline{c}}
\]
in $\euCpi$, in which case $(i^{\underline{a}},j^{\underline{b}},m^{\underline{c}},\alpha)$ is an admissable quadruple in $\euCpi$. This implies that every admissable quadruple in $\euCpi$ 
can be written unambiguously in the form $$(i^{\underline{a}},j^{\underline{b}},m,\alpha)$$ where $i^{\underline{a}}, j^{\underline{b}},m^{\underline{a}+\underline{b}+s^{ij}_m(\alpha)} \in J$
and $(i,j,m,\alpha)$ is an admissable quadruple in $\uC$.  In the same way, every admissable decuple in $\euCpi$ can be written unambiguously as
\begin{equation*}
(i^{\underline{a}},j^{\underline{b}}, m, k^{\underline{c}},n,t,\alpha,\beta,\eta,\varphi)
\label{eq:eucpidecuple}
\end{equation*}
where $i^{\underline{a}}$, $j^{\underline{b}}$, $m^{\underline{a}+\underline{b}+s^{ij}_m(\alpha)}$, 
$t^{\underline{b}+\underline{c}+s^{jk}_t(\eta)}$, $n^{\underline{a}+\underline{b}+\underline{c} + s^{ij}_m(\alpha) + s^{mk}_n(\beta)} \in J$,
and $(i,j,m,k,n,t,\alpha,\beta,\eta,\varphi)$ is a parity admissable decuple in $\uC$. 

\begin{definition} \label{thm:6jrelation} Let $\uC$ be a superfusion category, and $\euCpi$ its underlying fusion category.  
If $(i^{\underline{a}},j^{\underline{b}}, m, k^{\underline{c}},n,t,\alpha,\beta,\eta,\varphi)$ is an admissable
decuple in $\euCpi$, let
\begin{equation*}
F^{i^{\underline{a}}j^{\underline{b}}m,\alpha\beta}_{k^{\underline{c}}nt,\eta\varphi} := (-1)^{\underline{c} s^{ij}_m(\alpha)} \widetilde{F}^{ijm,\alpha\beta}_{knt,\eta\varphi}.
\end{equation*}
If $(i^{\underline{a}},j^{\underline{b}}, m, k^{\underline{c}},n,t,\alpha,\beta,\eta,\varphi)$ is not admissable,
then let $F^{i^{\underline{a}}j^{\underline{b}}m,\alpha\beta}_{k^{\underline{c}}nt,\eta\varphi} = 0$.
\end{definition}

We claim that the symbols defined above are in fact the 6$j$-symbols of $\euCpi$.  Indeed,
they satisfy the following version of the pentagon equation.

\begin{theorem}[Pentagon equation]\label{thm:peq}
Let $\uC$ be a superfusion category with simple objects indexed by a set $I$, and $\euCpi$
the underlying fusion category.  Then
\begin{multline}
\sum_{t \in I} \sum_{\eta = 1}^{N_t^{jk}} \sum_{\varphi = 1}^{N_n^{it}} \sum_{\kappa=1}^{N_s^{tl}}  F^{i^{\underline{a}}j^{\underline{b}}m,\alpha\beta}_{k^{\underline{c}}nt,\eta\varphi} F^{i^{\underline{a}}t^{\underline{b}+\underline{c}+s^{jk}_t(\eta)}n,\varphi\chi}_{l^{\underline{d}}ps,\kappa\gamma} F^{j^{\underline{b}} k^{\underline{c}} t, \eta \kappa}_{l^{\underline{d}}sq,\delta\phi} \\= \sum_{\epsilon=1}^{N_p^{mq}} F^{m^{\underline{a}+\underline{b}+s^{ij}_m(\alpha)} k^{\underline{c}} n, \beta \chi}_{l^{\underline{d}} p q, \delta \epsilon} F^{i^{\underline{a}}j^{\underline{b}}m,\alpha \epsilon}_{q^{\underline{c}+\underline{d}+s^{kl}_q(\delta)}ps,\delta \gamma}
\label{eq:peftilde}
\end{multline}
for all $i,j,k,l,m,n,t,p,q,s \in I$, $\underline{a}, \underline{b}, \underline{c} \in \Z/2\Z$, and $\alpha, \beta, \eta, \chi, \delta, \phi \in \Z_{\geq0}$.
\end{theorem}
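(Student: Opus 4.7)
The plan is to substitute Definition \ref{thm:6jrelation} into both sides of \eqref{eq:peftilde} and reduce the identity to the super pentagon equation \eqref{def:6j_spentagon}. Each factor $F$ differs from the corresponding $\widetilde{F}$ only by a sign of the form $(-1)^{(\text{underline of third upper slot})\cdot s(\text{first multiplicity index})}$, and both sides of \eqref{eq:peftilde} vanish whenever the underlying decuple in $\uC$ fails to be parity admissable, so it suffices to check the identity on parity admissable decuples. The entire argument is sign bookkeeping.

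First I would read off the sign prefactor on each side. Applying Definition \ref{thm:6jrelation} to the three factors on the LHS of \eqref{eq:peftilde} produces the signs $(-1)^{\underline{c}\,s^{ij}_m(\alpha)}$, $(-1)^{\underline{d}\,s^{it}_n(\varphi)}$ and $(-1)^{\underline{d}\,s^{jk}_t(\eta)}$, while the two factors on the RHS contribute $(-1)^{\underline{d}\,s^{mk}_n(\beta)}$ and $(-1)^{(\underline{c}+\underline{d}+s^{kl}_q(\delta))\,s^{ij}_m(\alpha)}$. The RHS sign is independent of the summation index $t$ (and of $\eta,\varphi,\kappa$), but the LHS sign is not, so a priori one cannot pull it out of the sum.

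This is where the parity admissability condition \eqref{eq:defparityadmissable} enters: applied to the decuple $(i,j,m,k,n,t,\alpha,\beta,\eta,\varphi)$ it gives $s^{it}_n(\varphi)+s^{jk}_t(\eta) \equiv s^{ij}_m(\alpha)+s^{mk}_n(\beta) \pmod{2}$, so the combined LHS sign becomes
\[
(-1)^{\underline{c}\,s^{ij}_m(\alpha) + \underline{d}\bigl(s^{it}_n(\varphi)+s^{jk}_t(\eta)\bigr)} \;=\; (-1)^{(\underline{c}+\underline{d})\,s^{ij}_m(\alpha) + \underline{d}\,s^{mk}_n(\beta)},
\]
which is now independent of the summation variables. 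A direct comparison shows that the total RHS sign equals this same factor times the extra $(-1)^{s^{ij}_m(\alpha)\,s^{kl}_q(\delta)}$, which is precisely the sign appearing in front of the sum on the right-hand side of the super pentagon equation \eqref{def:6j_spentagon}.

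Multiplying both sides of \eqref{def:6j_spentagon} by the common factor $(-1)^{(\underline{c}+\underline{d})\,s^{ij}_m(\alpha) + \underline{d}\,s^{mk}_n(\beta)}$ then yields exactly \eqref{eq:peftilde}, which completes the argument. The only real subtlety is recognising that the $t$-dependent sign on the LHS can be made $t$-independent only after invoking parity admissability; once that observation is made, the remainder is a mechanical comparison of sign exponents modulo $2$, with no other obstacles.
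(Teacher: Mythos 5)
Your proposal is correct and follows essentially the same route as the paper: substitute Definition \ref{thm:6jrelation} into both sides, collect the sign exponents, and use the parity admissability condition \eqref{eq:defparityadmissable} to cancel the $\underline{d}\bigl(s^{it}_n(\varphi)+s^{jk}_t(\eta)\bigr)$ term against $\underline{d}\bigl(s^{ij}_m(\alpha)+s^{mk}_n(\beta)\bigr)$, leaving exactly the factor $(-1)^{s^{ij}_m(\alpha)s^{kl}_q(\delta)}$ from the super pentagon equation \eqref{def:6j_spentagon}. All five sign prefactors you read off agree with the paper's computation, and your explicit remark that the $t$-dependent sign on the left becomes summation-independent only on parity admissable (hence nonvanishing) terms is the same point the paper makes by restricting to admissable decuples in $\euCpi$.
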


\begin{proof}
By combining our definition \eqref{thm:6jrelation} with the super pentagon equation \eqref{def:6j_spentagon}, we have the equality
\begin{multline*}
\sum_{t \in I} \sum_{\eta=1}^{N_t^{jk}} \sum_{\varphi=1}^{N_n^{it}} \sum_{\kappa=1}^{N_s^{tl}} (-1)^{\underline{c}s^{ij}_m(\alpha)+\underline{d}s^{it}_n(\varphi)+\underline{d} s^{jk}_t(\eta)} F^{ijm,\alpha\beta}_{knt,\eta\varphi} F^{itn,\varphi\chi}_{lps,\kappa\gamma} F^{jkt,\eta\kappa}_{lsq,\delta\phi}\\
= (-1)^{s^{ij}_m(\alpha)s^{kl}_q(\delta)} \sum_{\epsilon=1}^{N^{mq}_p} (-1)^{\underline{d}s^{mk}_n(\beta)+(\underline{c}+\underline{d}+s^{kl}_q(\delta))s^{ij}_m(\alpha)} F^{mkn,\beta\chi}_{lpq,\delta\epsilon} F^{ijm,\alpha\epsilon}_{qps,\delta\gamma}
\end{multline*}
and thus it suffices to show that
\begin{equation*}
\underline{c}s^{ij}_m(\alpha) + \underline{d}s^{it}_n(\varphi) + \underline{d} s^{jk}_t(\eta) = s^{ij}_m(\alpha)s^{kl}_q(\delta) + \underline{d} s^{mk}_n(\beta) + (\underline{c}+\underline{d}+s^{kl}_q(\delta))s^{ij}_m(\alpha)
\end{equation*}
for all admissable decuples $(i^{\underline{a}},j^{\underline{b}},m,k^{\underline{c}},n,t,\alpha,\beta,\eta,\varphi)$ in $\euCpi$.  This immediately
reduces to showing that
\begin{equation*}
\underline{d} s^{it}_n(\varphi) + \underline{d} s^{jk}_t(\eta) = \underline{d} s^{mk}_n(\beta) + \underline{d} s^{ij}_m(\alpha)
\end{equation*}
which holds by the parity compatibility condition \eqref{eq:defparityadmissable}.
\end{proof}

\begin{remark}
Our definition  of the 6$j$-symbols in $\euCpi$ can be recovered directly from
the construction of $\euCpi$, in which case Theorem \ref{thm:peq} can be viewed as a corollary of the pentagon axiom in $\euCpi$.
Indeed, for each admissable quadruple $(i^{\underline{a}},j^{\underline{b}},m,\alpha)$ in $\euCpi$, let
\begin{equation}
\e^{i^{\underline{a}}j^{\underline{b}}}_m(\alpha) := \left(\e^{ij}_m(\alpha)\right)_{\underline{a}+\underline{b}}^{\underline{a}+\underline{b}+s^{ij}_m(\alpha)} : X_i^{\underline{a}} \otimes X_j^{\underline{b}} \to X_m^{\underline{a}+\underline{b}+s^{ij}_m(\alpha)}
\end{equation}
For ease of notation, set $\underline{d} = \underline{a}+\underline{b}+s^{ij}_m(\alpha)$ and $\underline{e} = \underline{a}+\underline{b}+ \underline{c} s^{ij}_m(\alpha) + s^{mk}_n(\beta)$,
then \eqref{eq:6j_fusioncat} is given by
\begin{equation}
(X_i^{\underline{a}} \otimes X_j^{\underline{b}}) \otimes X_k^{\underline{c}} \xrightarrow{\e^{i^{\underline{a}}j^{\underline{b}}}_m(\alpha) \otimes \id_{X_k^{\underline{c}}}} X_m^{\underline{d}} \otimes X_k^{\underline{c}} \xrightarrow{\e^{m^{\underline{d}}k^{\underline{c}}}_n(\beta)} X_n^{\underline{e}}
\label{eq:uelhs}
\end{equation}
where we have
\begin{equation}
\e^{i^{\underline{a}}j^{\underline{b}}}_m(\alpha) \otimes \id_{X_k^{\underline{c}}} = (-1)^{\underline{c} s^{ij}_m(\alpha)} \left(\e^{ij}_m(\alpha) \otimes \id_{X_k}\right)_{\underline{a}+\underline{b}+\underline{c}}^{\underline{c}+\underline{d}}
\end{equation}
by definition of the tensor product on $\euCpi$.  Next, fix an admissable quadruple $(j^{\underline{b}},k^{\underline{c}}, t, \eta)$.
The composition \eqref{eq:6j_fusioncatrhs} is given by
\begin{equation}
(X_i^{\underline{a}} \otimes X_j^{\underline{b}}) \otimes X_k^{\underline{c}} \xrightarrow{a(X_i^{\underline{a}},X_j^{\underline{b}},X_k^{\underline{c}})} X_i^{\underline{a}} \otimes (X_j^{\underline{b}} \otimes X_k^{\underline{c}}) \xrightarrow{ \id_{X_i^{\underline{a}}} \otimes \e^{j^{\underline{b}}k^{\underline{c}}}_{t}(\eta)} X_i^{\underline{a}} \otimes X_t^{\underline{f}} \xrightarrow{\e^{i^{\underline{a}}t^{\underline{f}}}_n(\varphi)} X_n^{\underline{e}}
\label{eq:uerhs}
\end{equation}
where $\underline{f} = \underline{b}+\underline{c}+s^{jk}_t(\eta)$.  We compute
\begin{equation}
\id_{X_i^{\underline{a}}} \otimes \e^{j^{\underline{b}}k^{\underline{c}}}_{t}(\eta) = \left( \id_{X_i} \otimes \e^{jk}_t(\eta) \right)_{\underline{a}+\underline{b}+\underline{c}}^{\underline{a}+\underline{f}}
\end{equation}
and so the compositions \eqref{eq:uelhs} and \eqref{eq:uerhs} in $\euCpi$ are induced by the corresponding compositions \eqref{eq:6j_fusioncat} and \eqref{eq:6j_fusioncatrhs} in $\uC$ up to a factor of $(-1)^{\underline{c}s^{ij}_m(\alpha)}$, as expected.
\end{remark}

\begin{example}
Let $\uC$ be a Bosonic pointed superfusion category, as in Examples \ref{ex:pted_1} and \ref{ex:pted_2}.  
For all $g^{\underline{a}}, h^{\underline{b}}, k^{\underline{c}} \in G_\omega$ we can unambiguously write $F(g^{\underline{a}},h^{\underline{b}},k^{\underline{c}}) \in \C^\times$ for the corresponding 6$j$-symbol in $\euCpi$.
With this notation Theorem \ref{thm:6jrelation} implies 
\begin{equation*}
F(g^{\underline{a}},h^{\underline{b}},k^{\underline{c}}) = (-1)^{\underline{c} \omega(g,h)} \widetilde{F}(g,h,k) 
\end{equation*}
for all $g^{\underline{a}}, h^{\underline{b}}, k^{\underline{c}} \in G_\omega$.  The pentagon equation \eqref{eq:peftilde} implies
that $F$ is a 3-cocycle on $G_\omega$ with values in $\C^\times$.
\end{example}

Viewing $G$ as the subset of $G_\omega$ consisting of elements of the form $g^{\underline{0}}$,
we have the following corollary.
\begin{corollary}
Let $\widetilde{F} : G^3 \to \C^\times$ be a 3-supercocycle on $G$ with 2-cocycle $\omega$.
Then there exists a 3-cocycle $F : G_\omega^3 \to \C^\times$ on $G_\omega$ such that
\begin{equation*}
F|_{G^3} = \widetilde{F}
\end{equation*}
\label{cor:super3cocycle}
\end{corollary}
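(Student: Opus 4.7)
The plan is to realize the abstract data $(\widetilde{F},\omega)$ as coming from a concrete Bosonic pointed superfusion category, and then read off the desired 3-cocycle from its underlying fusion category via Theorem \ref{thm:peq} and the preceding example.

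First, I would construct a Bosonic pointed superfusion category $\uC$ whose group of simples is $G$, whose parity 2-cocycle is $\omega$, and whose fermionic 6$j$-symbols are $\widetilde{F}$. The natural candidate is the super analogue of $\vecgw$: take as simple objects $\{X_g\}_{g\in G}$ with fusion rules $X_g\otimes X_h \simeq X_{gh}$, declare $\Hom_{\uC}(X_g\otimes X_h, X_{gh})$ to be one-dimensional with parity $\omega(g,h)$, and define the associator by scaling the canonical basis vectors by $\widetilde{F}(g,h,k)$. The 2-cocycle condition on $\omega$ is exactly the parity admissability requirement \eqref{eq:defparityadmissable}, and the 3-supercocycle equation for $\widetilde{F}$ is exactly the super pentagon equation \eqref{def:6j_spentagon}, so this assembles into a bona fide superfusion category.

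Next, I would apply Example \ref{ex:pted_2} to identify the underlying fusion category $\euCpi$ as a pointed fusion category whose simples are indexed by the central extension $G_\omega$ of $G$ by $\Z/2\Z$ determined by $\omega$. By the 6$j$-symbol formula in Definition \ref{thm:6jrelation} together with Theorem \ref{thm:peq}, the function
\begin{equation*}
F(g^{\underline{a}},h^{\underline{b}},k^{\underline{c}}) := (-1)^{\underline{c}\,\omega(g,h)}\,\widetilde{F}(g,h,k)
\end{equation*}
is a 3-cocycle on $G_\omega$ with values in $\C^\times$, as explained in the example immediately preceding the corollary. Identifying $G$ with the subset $\{g^{\underline{0}}:g\in G\}\subset G_\omega$, the factor $(-1)^{\underline{c}\,\omega(g,h)}$ becomes $1$, so $F|_{G^3}=\widetilde{F}$, which is the required extension.

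The main obstacle in this plan is the verification that the concrete supercategory built in step one really is a superfusion category; this is where one must check that the hexagon of tensor products, parity assignments and associator scalars is consistent. However this reduces mechanically to the two hypotheses we already have: that $\omega$ is a $\Z/2\Z$-valued 2-cocycle and that $\widetilde{F}$ satisfies the fermionic pentagon identity. If one prefers to bypass the categorical construction, one can instead prove the corollary by direct computation: define $F$ by the boxed formula above and verify the 3-cocycle equation on $G_\omega$ by expanding both sides using the group law $g^{\underline{a}}\cdot h^{\underline{b}}=(gh)^{\underline{a}+\underline{b}+\omega(g,h)}$, collecting signs, and invoking the super pentagon equation for $\widetilde{F}$ together with the 2-cocycle identity for $\omega$ — this is essentially the bookkeeping already carried out inside the proof of Theorem \ref{thm:peq}.
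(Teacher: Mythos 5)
Your proposal is correct and takes essentially the same route as the paper: the corollary is obtained from the example immediately preceding it, via the same formula $F(g^{\underline{a}},h^{\underline{b}},k^{\underline{c}})=(-1)^{\underline{c}\,\omega(g,h)}\widetilde{F}(g,h,k)$ together with the observation that the sign disappears on the subset $\{g^{\underline{0}}\}\simeq G$ of $G_\omega$. Your only addition is to make explicit the step the paper leaves implicit --- realizing an abstract pair $(\widetilde{F},\omega)$ by a super analogue of $\mathrm{Vec}_G^\tau$, or equivalently verifying the cocycle identity directly from the super pentagon equation and the $\Z/2\Z$-cocycle condition on $\omega$ --- and both of the routes you sketch for this are sound.
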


In other words, every 3-supercocycle on $G$ arises as the restriction of a 3-cocycle on
a central extension of $G$ by $\Z/2\Z$.

\section{Applications}
In this section, we describe some applications of the theory of fusion categories
to that of superfusion categories.  In particular, we define the $\pi$-Grothendieck ring of a superfusion category,
and prove a version of Ocneanu rigidity for superfusion categories.

\subsection{Superforms}

Let $\uD$ be a $\Pi$-complete superfusion category.  A \emph{superform} of $\uD$ is a superfusion category
$\uC$ such that $\uC \simeq \uD$ are equivalent (but not necessarily superequivalent) 
superfusion categories.  Our goal is to prove the following.

\begin{proposition}
A $\Pi$-complete superfusion category $\uD$ has only finitely many superforms, up to superequivalence of superfusion categories.
\label{prop:finsuperforms}
\end{proposition}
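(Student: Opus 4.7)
The plan is to exploit the equivalence between $\Pi$-complete superfusion categories and fusion categories over $\usvec$ established in Lemma \ref{lem:assocup}, and to reduce the counting of superforms to a finite combinatorial problem.

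First I would show that if $\uC$ is any superform of $\uD$, then its superadditive envelope $\uCpi$ is in fact superequivalent to $\uD$. Since $\uC$ is equivalent to $\uCpi$ via $J$, and $\uC \simeq \uD$ by hypothesis, we have $\uCpi \simeq \uD$ as superfusion categories; as both categories are $\Pi$-complete, I would argue that this equivalence can be promoted to a superequivalence by composing the relevant natural isomorphisms with suitable twists involving the canonical odd isomorphisms $\zeta$. After this step, superforms of $\uD$ correspond, up to superequivalence, to superfusion categories $\uC$ whose superadditive envelope is a fixed $\Pi$-complete superfusion category, namely $\uD$ itself.

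Next, I would use Lemma \ref{lem:assocup} to transport the problem into the language of fusion categories over $\usvec$: $\uD$ corresponds to $\underline{\uD}$ equipped with its $\usvec$-central structure, and for every superform $\uC$, the category $\uCpi$ corresponds to the same fusion category over $\usvec$. Hence a superform of $\uD$ amounts to a superfusion category $\uC$ together with a fully faithful embedding $\uC \hookrightarrow \uD$ that realises $\uD$ as the superadditive envelope of $\uC$. Concretely, this presents $\uC$ as a full sub-supercategory of $\uD$ on a choice of simple objects forming a transversal for the isomorphism classes of simples of $\uD$ modulo the $\pi$-shift, together with the Bosonic/Majorana classification of the chosen simples induced from $\uD$.

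To finish, I would observe that $\uD$ has only finitely many isomorphism classes of simple objects, hence only finitely many transversals; the Bosonic/Majorana structure on each chosen simple is dictated by its orbit in $\uD$; and once the simples of $\uC$ have been chosen, the tensor product, associator, and morphism spaces of $\uC$ are uniquely determined by restriction from $\uD$. This yields at most finitely many superforms of $\uD$ up to superequivalence. The main obstacle I anticipate is the first step: rigorously establishing that an equivalence of $\Pi$-complete superfusion categories can be adjusted to a superequivalence, since one must check that the odd isomorphisms coming from the $\Pi$-structure supply enough flexibility to absorb the odd components of an arbitrary equivalence between them.
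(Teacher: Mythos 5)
Your proposal follows essentially the same route as the paper: reduce the classification of superforms to the finitely many ways of choosing, for each Bosonic simple of $\uD$, which of its $\pi$-translates lies in the (even essential) image of $\uC$ in $\uD$, and then argue that this full subcategory determines $\uC$ up to superequivalence. Three remarks. First, the paper bypasses your first step entirely: it works directly with the even essential image $F(\uC) \subseteq \uD$ of the given equivalence $F : \uC \to \uD$, so no promotion of an equivalence of $\Pi$-complete categories to a superequivalence is needed (though the promotion you worry about does hold: if the source is $\Pi$-complete, any odd isomorphism $Y \iso F(X)$ can be corrected to an even one $Y \iso F(\pi \otimes X)$ using the odd isomorphism $\pi \otimes X \iso X$, so such a functor is automatically evenly essentially surjective). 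Second, the chosen simples need not form a transversal for the $\pi$-shift: for a Bosonic simple $Y_i$ the image may contain \emph{both} $Y_i$ and its odd translate $Y_i'$ (for instance when $\uC$ is itself $\Pi$-complete), so the count is over nonempty subsets of each orbit rather than over transversals; this does not affect finiteness, but your bijective-sounding description of superforms is not quite right. Third, your closing assertion --- that once the simples are fixed, the tensor product, associator and morphism spaces of $\uC$ are determined by restriction from $\uD$, so that the subcategory determines $\uC$ up to superequivalence --- is precisely the content of the paper's Lemma \ref{lem:superformequiv}, and it is the only step requiring real work (one must build the superequivalence $K$ and its even coherence maps $J_{X,Y}$ from the chosen even isomorphisms $q_X$); asserting it without proof is the one substantive omission in your write-up, though the statement is true and the verification is routine.
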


To show this, the following notion will be useful.

\begin{definition}
Let $\uC$ and $\uD$ be superfusion categories, and $F : \uC \to \uD$ a tensor superfunctor.
Its \emph{even essential image} $F(\uC)$ is the full subcategory of $\uD$ consisting of objects
evenly isomorphic to $F(X)$ for some $X \in \uC$.
\end{definition}

Recall that a tensor superfunctor $F : \uC \to \uD$ is a superfunctor such
that $F(\unit_{\uC})$ is evenly isomorphic to $\unit_{\uD}$, together with an even
natural isomorphism $c_{X,Y} : F(X) \otimes F(Y) \iso F(X \otimes Y)$ satisfying the usual diagram (see \cite[\S 2.4]{ENO:2015}).  Observe that $F(\uC)$ is a full tensor subcategory of $\uD$.  Indeed given $Y, Y' \in F(\uC)$,
there exists $X, X' \in \uC$ such that $F(X) \iso Y$ and $F(X') \iso Y'$ are evenly isomorphic.
Then $F(X \otimes X') \iso F(X) \otimes F(X') \iso Y \otimes Y'$ is an even isomorphism, whence
$Y \otimes Y' \in F(\uC)$.  

\begin{lemma}
If $F : \uC \to \uD$ is an equivalence of superfusion categories, then $\uC$ is determined (up to superequivalence)
by $F(\uC)$.  More precisely, if $G : \uA \to \uD$ is an equivalence of superfusion categories with $G(\uA) = F(\uC)$,
then $\uA$ and $\uC$ are superequivalent superfusion categories.
\label{lem:superformequiv}
\end{lemma}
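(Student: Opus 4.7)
The plan is to show that each of $F$ and $G$ becomes a superequivalence of superfusion categories after corestricting its codomain to its even essential image, and then to compose with a quasi-inverse in order to exhibit $\uA$ and $\uC$ as superequivalent.

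First I would argue that the corestriction $F : \uC \to F(\uC)$ is a superequivalence. Since $F : \uC \to \uD$ is an equivalence of superfusion categories, the induced maps $\Hom_{\uC}(X,Y) \to \Hom_{\uD}(F(X), F(Y))$ are isomorphisms of superspaces, and this property is preserved after restricting the codomain to any full subcategory containing the image. By the very definition of the even essential image, for each $Y \in F(\uC)$ we may choose $X_Y \in \uC$ together with an even isomorphism $\phi_Y : F(X_Y) \iso Y$; in particular one takes $X_{F(X)} = X$ with $\phi_{F(X)} = \id$. Setting $F^{-1}(Y) := X_Y$ on objects, and $F^{-1}(f) := F^{-1}(\phi_{Y'}^{-1} \circ f \circ \phi_Y)$ on morphisms (using that $F$ is a super-linear bijection on homs), defines a superfunctor $F^{-1} : F(\uC) \to \uC$, since pre- and post-composition with even isomorphisms preserves parity. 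The family $\{\phi_Y\}$ assembles into an even natural isomorphism $F \circ F^{-1} \iso \id_{F(\uC)}$, and $F^{-1} \circ F = \id_{\uC}$ by construction.

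The same reasoning applied to $G$ yields a superequivalence $G : \uA \iso G(\uA)$. The hypothesis $G(\uA) = F(\uC)$ identifies these two full tensor subcategories of $\uD$, so the composite $H := F^{-1} \circ G : \uA \to \uC$ is a candidate superequivalence between $\uA$ and $\uC$. To upgrade $H$ to a tensor superequivalence, I would combine the even coherence data of $F$ and $G$ with the family $\{\phi_Y\}$ and the naturality of composition in $\uD$ to produce an even coherence isomorphism $H(X) \otimes H(X') \iso H(X \otimes X')$, and then verify that the monoidal coherence axioms follow from those of $F$, $G$, and the identifications in $\uD$.

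The main obstacle will be this last step: the careful verification that $F^{-1}$ can be endowed with tensor structure and that, consequently, $H$ respects the tensor product evenly. This is a formal but laborious diagram chase invoking (i) that $F(\uC)$ is closed under the tensor product of $\uD$ up to even isomorphism, as observed in the paragraph preceding the statement, and (ii) that the even isomorphisms $\phi_Y$ can be transported through the tensor product using the even coherence map of $F$. Since every isomorphism in play is even, the signs arising from the super interchange law are trivial in the diagrams at hand, so the verification proceeds exactly as in the classical fusion category setting.
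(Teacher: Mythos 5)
Your proposal is correct and is essentially the paper's argument: unwinding your composite $H = F^{-1}\circ G$, with the chosen even isomorphisms $\phi_{G(X)} : F(X_{G(X)}) \iso G(X)$, gives exactly the functor $K$ the paper defines directly via $K(f) = F^{-1}(q_Y^{-1}\circ G(f)\circ q_X)$, and both treatments transport the tensor structure through the coherence data of $F$ and $G$ in the same way. The only cosmetic difference is that you factor through the intermediate superequivalence $\uC \iso F(\uC)$ rather than writing the composite in one step.
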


\begin{proof}
If $X \in \uA$, then $G(X) \in G(\uA) = F(\uC)$, so there exists
$X_{\uC} \in \uC$ such that $F(X_{\uC}) \iso G(X)$ are evenly isomorphic.  For each $X \in \uA$,
we pick such a $X_{\uC} \in \uC$ together with an even isomorphism $q_X : F(X_{\uC}) \iso G(X)$.  We define a superfunctor $K : \uA \to \uC$ as follows.  On objects, let $K(X) = X_{\uC}$.
On morphisms, if $f \in \Hom_{\uA}(X,Y)$ then let $K(f) = F^{-1}(q_Y^{-1} \circ G(f) \circ q_X)$, i.e.
$K(f)$ is the image of $f$ under the even isomorphism
\[
\Hom_{\uA}(X,Y) \xrightarrow{G} \Hom_{\uD}(G(X), G(Y)) \xrightarrow{(q_Y^{-1})_* \circ (q_X)^*} \Hom_{\uC}(F(X_\uC),F(Y_{\uC})) \xrightarrow{F^{-1}} \Hom_{\uC}(X_{\uC},Y_{\uC})
\]
It is immediate from $K$ is fully faithful, and functorality of $F$ and $G$ implies $K$ is a superfunctor.  
To show that $K$ is a superequivalence, we must show that $K(\uA) = \uC$.  Let $Y \in \uC$, then $F(Y) \in F(\uC) = G(\uA)$ so there exists
$X \in \uA$ together with an even isomorphism $G(X) \iso F(Y)$, so $F(X_c) \iso F(Y)$ are evenly isomorphic.
This implies that $K(X) = X_c \iso Y$ are evenly isomorphic, i.e. $Y \in K(\uA)$. Thus $K$ is a superequivalence.

It remains to endow $K$ with the structure of a monoidal superfunctor.  To do this, we must define
even coherence maps $J_{X,Y} : K(X) \otimes K(Y) \to K(X \otimes Y)$ satisfying the usual axioms.  Let $c$ and $d$ denote the coherence maps for $F$ and $G$ respectively.  Let ${\varphi_{X,Y} : F(X_\uC \otimes Y_\uC) \iso F((X \otimes Y)_\uC)}$
be the composition
\[
F(X_\uC \otimes Y_\uC) \xrightarrow{c_{X_\uC,Y_\uC}^{-1}} F(X_\uC) \otimes F(Y_\uC) \xrightarrow{q_x \otimes q_y} G(X) \otimes G(Y) \xrightarrow{d_{X,Y}} G(X \otimes Y) \xrightarrow{q_{X \otimes Y}^{-1}} F((X \otimes Y)_\uC)
\]
With this notation, let $J_{X,Y} := F^{-1}(\varphi_{X,Y})$.  It is straightforward to check that $(K,J)$ satisfies
the axioms for a monoidal superfunctor, and so $K$ is a superequivalence of superfusion categories.
\end{proof}

We are now ready to prove the above proposition.

\begin{proof}[Proof of Proposition \ref{prop:finsuperforms}]
Let $F : \uC \to \uD$ be an equivalence of superfusion categories, where $\uD$ is $\Pi$-complete.  Let $Y_i$, $i \in I$ be a set of representatives of simple objects of $\uD$.  Since $F$ is an equivalence,
there exists $X_i$, $i \in I$ such that $F(X_i) \iso Y_i$.  Since $\uD$ is $\Pi$-complete, for each
$i \in I$ there exists $Y'_i \in \uD$ such that $Y_i \iso Y'_i$ are oddly isomorphic.   Fix $i \in I$.  If $Y_i$ is Majorana, then $\Hom_{\uD}(F(X_i), Y_i) \simeq \C^{1|1}$, so $Y_i \in F(\uC)$
and $Y'_i \in F(\uC)$.  If $Y_i$ is Bosonic, then the space $\Hom_{\uD}(F(X_i),Y_i)$ is one-dimensional,
either even or odd.  So $Y_i \in F(\uC)$ or $Y'_i \in F(\uC)$ (or possibly both).  Since the subcategory $F(\uD)$
is determined by the choice of $Y_i$ or $Y_i'$ (or both) for all $i \in I$ such that $Y_i$ is Bosonic,
and there are finitely many such choices, there are finitely many possibilities for $F(\uC)$.  By Lemma \ref{lem:superformequiv}, we are done.
\end{proof}

\begin{corollary}
The number of superfusion categories (up to superequivalence) is countable.
\end{corollary}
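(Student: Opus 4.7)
The plan is to reduce the claim to classical Ocneanu rigidity for ordinary fusion categories by chaining the two reductions already built up in the paper. First I would observe that every superfusion category $\uC$ is equivalent as a superfusion category (though not necessarily superequivalent) to its superadditive envelope $\uCpi$, so $\uC$ is in particular a superform of a $\Pi$-complete superfusion category. By Proposition \ref{prop:finsuperforms}, each $\Pi$-complete superfusion category has only finitely many superforms up to superequivalence. The collection of superequivalence classes of superfusion categories is therefore a union, indexed by superequivalence classes of $\Pi$-complete superfusion categories, of finite sets; it will be countable as soon as this index set is countable.

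Next I would invoke the equivalence between $\Pi$-complete superfusion categories and fusion categories over $\usvec$ given by Lemma \ref{lem:fcatusvec} and Lemma \ref{lem:assocup} (spelled out as an equivalence of 2-categories in \cite{BE:2016}) to replace the counting problem with counting equivalence classes of fusion categories over $\usvec$. Such a category is an ordinary fusion category $\uA$ together with a finite amount of additional discrete data, namely an object $(\pi,\beta)\in\mathcal{Z}(\uA)$ with $\pi$ simple and an isomorphism $\xi:\pi\otimes\pi\iso\unit$ subject to the constraints of Definition \ref{def:fcoverusvec}. Once the underlying fusion category is fixed, the extra structure ranges over a finite set. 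It therefore suffices to show that there are countably many equivalence classes of fusion categories over $\C$.

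The hard input is exactly this last statement, which is the classical Ocneanu rigidity theorem of Etingof, Nikshych and Ostrik \cite{ENO:OFC}: fusion categories admit no non-trivial deformations, whence countability follows by the standard argument that a fusion category is determined up to equivalence by its fusion rules (a countable choice of non-negative integers) together with an associator which, for fixed fusion rules, lies in a finite set modulo gauge equivalence. Once this is granted, the result is immediate because a countable union of finite sets is countable. The main obstacle is thus external to this paper — one must invoke classical Ocneanu rigidity — while the superfusion-categorical content is fully captured by Proposition \ref{prop:finsuperforms} and the correspondence with fusion categories over $\usvec$.
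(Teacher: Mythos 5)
Your proposal is correct and follows essentially the same route as the paper: reduce to $\Pi$-complete superfusion categories via Proposition \ref{prop:finsuperforms}, identify these with fusion categories over $\usvec$ via Lemma \ref{lem:assocup}, and invoke Ocneanu rigidity \cite{ENO:OFC} for countability. The only point to tighten is your claim that the extra structure $(\pi,\beta,\xi)$ on a fixed fusion category ``ranges over a finite set'': the half-braiding $\beta$ is not literally discrete data, and the finiteness of braided functors $\usvec \to \mathcal{Z}(\uA)$ up to equivalence is itself part of Ocneanu rigidity (the paper cites \cite[Theorem 2.31]{ENO:OFC} for exactly this).
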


\begin{proof}
Ocneanu rigidity \cite[Theorem 2.28, Theorem 2.31]{ENO:OFC} implies there are countably many fusion categories
over $\usvec$.  Since every $\Pi$-complete superfusion category
is the associated superfusion category of a fusion category over $\usvec$, there are countably
many $\Pi$-complete superfusion categories.  Every superfusion category is equivalent to a $\Pi$-complete
superfusion category, so Proposition \ref{prop:finsuperforms} implies the result.
\end{proof}

\subsection[Pi-Grothendieck ring]{$\pi$-Grothendieck ring}
Let $\Z^\pi = \Z[\pi]/(\pi^2 - 1)$ and $\Z^\pi_+ = \{a + b \pi\ :\ a, b \in \Z_{\geq 0}\} \subset \Z^\pi$.

\begin{definition}
The \emph{$\pi$-Grothendieck group} of a supercategory $\uC$ is the $\Z^\pi$-module $\sgr$ generated by isomorphism classes of objects $[X]$ in $\uC$ subject to the relation that if $0 \to X \xrightarrow{f} Y \xrightarrow{g} Z \to 0$ is a short exact
sequence with $f$ and $g$ homogeneous morphisms, then $[Y] = [X] \pi^{|f|} + [Z] \pi^{|g|}$.  
\end{definition}

If $\uC$ is a rigid monoidal supercategory, then the tensor product on $\uC$ induces an associative multiplication
on $\sgr$, given by $[X] \cdot [Y] := [X \otimes Y]$, making $\sgr$ into a $\Z^\pi$-algebra.

\begin{definition}
We call $\sgr$ the \emph{$\pi$-Grothendieck ring} of $\uC$.
\end{definition}

\begin{example} \label{ex:svecfinsgrr}
Let $\svec_{\mathrm{fin}}$ denote the monoidal supercategory of finite dimensional superspaces together
with all linear maps between them, and let $\C^{p|q} = \C^p \oplus \C^q$ denote the superspace with $(\C^{p|q})_0 = \C^p$ and $(\C^{p|q})_1 = \C^q$,
then
\[
[\C^{p|q}] = p [\C^{1|0}] + q[\C^{0|1}] = (p + q\pi) [\C^{1|0}]
\]
in $\sgrr(\svec_{\mathrm{fin}})$, where we used that $[\C^{0|1}] = \pi [\C^{0|1}]$.  Since every object in $\svec_{\mathrm{fin}}$
is evenly isomorphic to $\C^{p|q}$ for some $p$ and $q$, this implies that $\sgrr(\svec_{\mathrm{fin}})$ is a free $\Z^\pi$-module,
generated by $[\C^{1|0}]$.  Moreover, the tensor product on $\svec_{\mathrm{fin}}$ gives
\[
[\C^{p|q}] [\C^{p'|q'}] = [\C^{pp'+qq'|pq'+qp'}]
\]
and so $\sgrr(\svec_{\mathrm{fin}})$ is free as a $\Z^\pi$-algebra.
\end{example}

Let $\uC$ be a superfusion category, and $X_i$, $i \in I$ representatives of the isomorphism classes
of simple objects in $\uC$.  To each $X$ in $\uC$ we can canonically associate the class $[X] \in \sgr$ given by the formula
\begin{equation}
[X] = \sum_i [X:X_i] [X_i]
\label{pgr:canonicalclass}
\end{equation}
where 
\begin{equation}
[X : X_i] = \dim \Hom_{\uC}(X_i,X)_{0} + \pi \dim \Hom_{\uC}(X_i,X)_1 \in \Z^\pi
\label{pgr:multiplicity}
\end{equation}
is the multiplicity of $X_i$ in $X$.  The multiplication on $\sgr$ is defined by
\[
[X_i] \cdot [X_j] = \sum_k [X_i \otimes X_j : X_k] [X_k]
\]

\begin{example}
Let $\uI$ be an \emph{Ising braided category}, i.e. a braided fusion category 
with $\mathrm{FPdim}(\uI) = 4$ that is not pointed, see \cite[Appendix B]{DGNO:BFC}.
Such a category contains precisely 3 isomorphism classes of simple objects: the unit object
$\unit$, an invertible object $\pi$ and a non-invertible object $X$ satisfying the fusion rules:
\[
\pi \otimes \pi \simeq \unit,\ \quad \pi \otimes X \simeq X \simeq X \otimes \pi,\ \quad X \otimes X \simeq \unit \oplus \pi
\]
The fusion subcategory $\uI_{ad} \subset \uI$ generated by $\unit$ and $\pi$ is braided equivalent to $\usvec$ \cite[Lemma B.11]{DGNO:BFC},
and thus $\uI$ is a fusion category over $\usvec$.  Let us consider the associated superfusion
category $\widehat{\uI}$.

The isomorphism $\pi \otimes \pi \simeq \unit$ in $\uI$ induces an odd isomorphism $\pi \iso \unit$ in $\widehat{\uI}$.
Similarly, the isomorphism $\pi \otimes X \simeq X$ in $\uI$ induces an odd isomorphism $X \iso X$ in $\widehat{\uI}$.
Thus $\widehat{\uI}$ has a Bosonic simple object $\unit \iso \pi$, and a Majorana simple object $X$.  From the fusion rules, we get the relations
\[
[X] = \pi[X],\quad \ [X]^2 = (1 + \pi)[1]
\]
in $\sgrr(\widehat{\uI})$.
\end{example}

\begin{example}[{see \cite[\S 8.18.2]{ENO:2015}}]
Generalising the previous example, take $k \equiv 2$ mod 4, and let $C_k(q)$ denote the braided fusion category
of integrable $\widehat{\mathfrak{sl}_2}$ modules at level $k$. This category has simple objects $V_i$, $i = 0,\ldots,k$
with unit object $V_0 = \unit$ and fusion rule given by the truncated Clebsch-Gordan rule:
\begin{equation}
V_i \otimes V_j \simeq \bigoplus_{l=\max(i+j-k,0)}^{\min(i,j)} V_{i+j-2l}
\label{eq:tcgr}
\end{equation}
The fusion subcategory $D_k(q) \subset C_k(q)$ generated by $\unit$ and $\pi := V_k$
is braided equivalent to $\usvec$, and so $C_k(q)$ is a fusion category over $\usvec$.
Let $\uC_k := \widehat{C_k(q)}$ denote the associated superfusion category.

Since $\pi \otimes V_i \simeq V_{k-i}$ in $C_k(q)$ for all $i = 0,\ldots,k$, we
have $V_i \iso V_{k-i}$ in $\uC_k$.  Thus $C_k(q)$ has $k/2$ Bosonic simple objects
$V_0, V_1,\ldots,V_{k/2-1}$, and a single Majorana simple object $V_{k/2}$.
\end{example}

Finally, we arrive at the following version of Ocneanu rigidity for superfusion categories.

\begin{corollary}
The number of superfusion categories (up to superequivalence) with a given $\pi$-Grothendieck ring
is finite.
\end{corollary}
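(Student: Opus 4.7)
The plan is to refine the argument for the preceding countability corollary so as to extract finiteness from a fixed $\pi$-Grothendieck ring $R$. First, since any superfusion category $\uC$ is equivalent, as a superfusion category, to its $\Pi$-complete superadditive envelope $\uCpi$, and since $\sgr$ is preserved by such equivalences (only even isomorphism classes and parity-graded Hom dimensions enter the definition), I would reduce to counting $\Pi$-complete superfusion categories $\uL$ with $\sgrr(\uL) \cong R$, up to superequivalence; Proposition \ref{prop:finsuperforms} then handles the passage back to arbitrary superforms.

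The heart of the argument is the observation that $R$ already determines the Grothendieck ring of the underlying fusion category $\underline{\uL}$. From $R$ one reads off the indexing set $I$ of (even) isomorphism classes of simples of $\uL$, the Bosonic/Majorana type of each $X_i$ (the Majorana simples being characterised by the relation $(1-\pi)[X_i]=0$), and, from the coefficients $c^{ij}_m \in \Z^\pi$ appearing in $[X_i][X_j] = \sum_m c^{ij}_m [X_m]$, the parity-graded Hom dimensions
\[
c^{ij}_m = \dim \Hom_{\uL}(X_i \otimes X_j, X_m)_{\underline{0}} + \pi \cdot \dim \Hom_{\uL}(X_i \otimes X_j, X_m)_{\underline{1}}.
\]
The simples of $\underline{\uL}$ are labelled by the set $J$ of \eqref{eq:jlabel}, which is determined by the Bosonic/Majorana data above; and by Definition \ref{def:pienvelope} the multiplicity of $X_m^{\underline{c}}$ in $X_i^{\underline{a}} \otimes X_j^{\underline{b}}$ inside $\underline{\uL}$ is precisely $\dim \Hom_{\uL}(X_i \otimes X_j, X_m)_{\underline{a}+\underline{b}+\underline{c}}$, one of the two summands of $c^{ij}_m$. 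Hence the Grothendieck ring of $\underline{\uL}$ is fully reconstructed from $R$.

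With this in hand, I would invoke classical Ocneanu rigidity \cite[Theorem 2.31]{ENO:OFC} to conclude that only finitely many fusion categories carry this Grothendieck ring, and note that each such fusion category $\uA$ admits only finitely many enhancements to a fusion category over $\usvec$, since the data $(\pi,\beta,\xi)$ of Definition \ref{def:fcoverusvec} is discrete (an invertible simple of order $2$ together with a compatible half-braiding and trivialisation, each drawn from a finite collection by rigidity). The equivalence between fusion categories over $\usvec$ and $\Pi$-complete superfusion categories (cf.\ Lemma \ref{lem:assocup} and the discussion immediately following its proof) then yields only finitely many $\Pi$-complete $\uL$ with $\sgrr(\uL) \cong R$, and a final appeal to Proposition \ref{prop:finsuperforms} completes the proof.

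The main obstacle I anticipate is cleanly verifying the middle step: one must check that the $\Z^\pi$-bookkeeping in $R$ (even versus odd parts of the coefficients $c^{ij}_m$) carries exactly the data required to separate the two lifts $X_i^{\underline{0}}, X_i^{\underline{1}}$ of a Bosonic simple and to recover every fusion multiplicity in $\underline{\uL}$. The parity-admissibility condition \eqref{eq:defparityadmissable} is what makes this reconstruction self-consistent, so the calculation should reduce to matching the $\pi$-grading in $R$ with the $\Z/2\Z$-label bookkeeping of Section 5.
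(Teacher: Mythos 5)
Your proof follows essentially the same route as the paper's: pass to the $\Pi$-complete envelope, observe that the $\pi$-Grothendieck ring determines the Grothendieck ring of the underlying fusion category, apply Ocneanu rigidity (\cite[Theorems 2.28 and 2.31]{ENO:OFC}) to bound the fusion categories over $\usvec$ with that Grothendieck ring, and return via Lemma \ref{lem:assocup} and Proposition \ref{prop:finsuperforms}. The only real difference is that you spell out the reconstruction of $\mathrm{Gr}(\uuL)$ from the $\Z^\pi$-coefficients $c^{ij}_m$ and the Majorana criterion $(1-\pi)[X_i]=0$, a step the paper asserts without detail; this is a welcome clarification rather than a divergence.
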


\begin{proof}
Fix a superfusion category $\uC$, and suppose that $\uD$ is a superfusion category with $\sgrr(\uC) \simeq \sgrr(\uD)$.
We will show that there are finitely many possibilities for $\uD$, up to superequivalence.  Since
$\sgrr(\uC) \simeq \sgrr(\uD)$, the underlying fusion categories $\euCpi$ and $\underline{\uD_\pi^+}$ have
isomorphic Grothendieck rings.  By Ocneanu rigidity \cite[Theorem 2.28]{ENO:OFC}, there are finitely many fusion categories with a given Grothendieck ring,  and moreover each of these fusion categories $\uA$ admits only finitely many tensor functors
$\usvec \to \mathcal{Z}(\uA)$ \cite[Theorem 2.31]{ENO:OFC}, hence there are finitely many
fusion categories over $\usvec$ with Grothendieck ring isomorphic to $\mathrm{Gr}(\euCpi)$. Lemma \ref{lem:assocup}
then implies that there are finitely many possibilties for $\uD_\pi^+$ up to superequivalence,
so by Proposition \ref{prop:finsuperforms} there are finitely many possibilities for $\uD$ up to superequivalence.
\end{proof}

\bibliography{references}{}
\bibliographystyle{alpha}

\end{document}